\newcounter{nootje}
\newtheorem{theorem}{Theorem}[section]
\newtheorem{lemma}[theorem]{Lemma}
\newtheorem{proposition}[theorem]{Proposition}
\newtheorem{corollary}[theorem]{Corollary}
\newtheorem{rem}[theorem]{Remark}
\numberwithin{equation}{section}
\newcommand{\rk}{\mbox{rank}}
\newcommand{\NS}{\mbox{NS}}
\newcommand{\MW}{\mbox{MW}}
\newcommand{\ra}{\rightarrow}
\newcommand{\PGL}{\operatorname{PGL}}
\newcommand{\cc}{\mathbb{C}}
\newcommand{\pp}{\mathbb{P}}
\newcommand{\PP}{ \mathbb{P}}
\newcommand{\E}{\mathcal{E}}
\newcommand{\C }{ \mathbb{C}}
\newcommand{\Z}{\mathbb{Z}}
\newcommand{\N}{\mathbb{N}}
\newcommand{\defi}[1]{\textsf{#1}}
\def\blfootnote{\xdef\@thefnmark{}\@footnotetext}
\title[Elliptic fibrations on covers of the elliptic modular surface]{Elliptic fibrations on covers of the elliptic modular surface of level 5\footnote{ L\MakeLowercase{ast} U\MakeLowercase{pdated:} \today}}
\author{
Francesca Balestrieri
}
\address[Francesca Balestrieri]{
\begin{itemize}
\item[-] University of Oxford, Mathematical Institute, Oxford, OX2 6HD, United Kingdom
\item[-] Partially supported by the EPSRC Scholarship EP$/L505031/1$.
\end{itemize}
} \email[Francesca Balestrieri]{balestrieri@maths.ox.ac.uk}
\author{
Julie Desjardins
}
\address[Julie Desjardins]{
\begin{itemize}
\item[-]
Universit\'e Grenoble Alpes, Institut Fourier, CNRS UMR 5582, 100 rue des Maths, BP 74, 38402 St Martin d'H\`eres, France
\end{itemize}
} \email[Julie Desjardins]{julie.desjardins@imj-prg.fr}
\author{
Alice Garbagnati
}
\address[Alice Garbagnati]{
\begin{itemize}
\item[-]Universit\`a Statale degli Studi di Milano\, Dipartimento di Matematica\\
via Saldini, 50 I20133 Milano
\item[-] Partially supported by FIRB 2012 ``Moduli Spaces and their Applications"
\end{itemize}
} \email[Alice Garbagnati]{alice.garbagnati@unimi.it}
\author{
C\'eline Maistret
}
\address[C\'eline Maistret]{
\begin{itemize}
\item[-] University of Bristol,
Howard House,
Queen's Avenue, Bristol BS8 1SD, UK
\end{itemize}
} \email[C\'eline Maistret]{cm16281@bristol.ac.uk }
\author{
Cec\'ilia Salgado
}
\address[Cec\'ilia Salgado]{
\begin{itemize}
\item[-] Universidade Federal do Rio de Janeiro (UFRJ)\, Instituto de Matem\'atica\\ Cidade Universit\'aria, Ilha do Fund\~ao, Rio de Janeiro.
\item[-] Partially supported by Cnpq grant $446873/2014-4$ and by Faperj Jovem cientista do Nosso estado grant E $10/2016/226621$.
\end{itemize}
} \email[Cec\'ilia Salgado]{salgado@im.ufrj.br}
\author{
Isabel Vogt
}
\address[Isabel Vogt]{
\begin{itemize}
\item[-]
Department of Mathematics, Massachusetts Institute of Technology, 77 Massachusetts Ave, Cambridge, MA 02139, USA
\item[-] Partially supported by National Science Foundation Graduate Research Fellowship Program and grant DMS-1601946.
\end{itemize}
} \email[Isabel Vogt]{ivogt@mit.edu}
\begin{document}

\maketitle

\markleft{}
\begin{abstract}
We consider the K3 surfaces that arise as double covers of the elliptic modular surface of level 5, $R_{5,5}$. Such surfaces have a natural elliptic fibration induced by the fibration on $R_{5,5}$. Moreover, they admit several other elliptic fibrations. We describe such fibrations in terms of linear systems of curves on $R_{5,5}$.  This has a major advantage over other methods of classification of elliptic fibrations, namely, a simple algorithm that has as input equations of linear systems of curves in the projective plane yields a Weierstrass equation for each elliptic fibration. We deal in detail with the cases for which the double cover is branched over the two reducible fibers of type $I_5$ and for which it is branched over two smooth fibers, giving a complete list of elliptic fibrations for these two scenarios.
\end{abstract}
\section{Introduction}

Let $S/\C$ be a smooth projective surface and $B/\C$ be a smooth
projective curve. We say that a proper flat map $\E \colon S \to B$
is an \defi{elliptic fibration} if the generic
fiber $S_b$ is a smooth genus 1 curve and a section
$O\colon B\ra S$
is given. Given a section, we regard the generic fibre of $\E$ as
an elliptic curve over the function field $k(B)$ and so we can work
with a Weierstrass form of $\E$. We will say that an elliptic
fibration is \defi{relatively minimal} if there are no contractible
curves contained in its fibers. For the remainder of this paper all elliptic fibrations will be assumed to be
relatively minimal and not of
product type.

Not all surfaces $S$ admit elliptic fibrations and if $S$ admits an elliptic fibration, a lot is known about the base curve and about the maximal number of elliptic fibrations on it. More precisely if $S$ is of general
type, then $S$ admits no elliptic fibrations;  if the Kodaira
dimension of $S$ is non-positive, then the curve $B$ is rational; if a surface $S$ admits more than one elliptic fibration as above, then it is a K3 surface (a surface with trivial canonical bundle and trivial irregularity).
In particular if $S$ is either a K3 surface or a rational surface,
then $B\simeq \mathbb{P}^1$. Every relatively minimal rational
elliptic surface is the total space of a pencil of plane cubics;
such surfaces admit only the obvious elliptic fibration. We refer to \cite{Mi} and to \cite{SS} for more on the theory of elliptic fibrations on surfaces.

\subsection{K3 surfaces arising from rational elliptic surfaces, and their elliptic fibrations}

In this paper we will consider K3 surfaces $S$ that are double covers of rational elliptic surfaces $R$, branched over two fibers of the elliptic fibration $\E_R:R\ra \mathbb{P}^1$.  More precisely,
such
K3 surfaces are the minimal resolution of the fiber product $\bar{S}$ of a
rational elliptic surface $\E_R\colon R\ra \mathbb{P}^1$ and a degree
two map $\mathbb{P}^1\ra\mathbb{P}^1$, which is necessarily
branched over two points.
We recall that an involution on a
K3 surface is
\defi{non--symplectic} if it acts by $-1$ on $H^0(S, K_S)
\simeq \C$.
Call $\iota$ the involution on $S$ agreeing with the cover involution of
the $2:1$ map $\bar{S}\to  R$; then this involution is non--symplectic and
fixes the inverse image of two fibers of the fibration
$\E_R:R\ra\mathbb{P}^1$, which are the branch curves.
The
quotient
surface $S/\iota$ is rational and is
either $R$
or a
blow-up of $R$ (denoted by $\widetilde{R}$ in what follows).
It was proved by Zhang (see \cite{Z}) that
every K3 surface $S$ admitting a non--symplectic involution
$\iota$ whose fixed locus contains curves of genus at most 1
arises by a base change of order two from a rational elliptic
fibration $\E_R\colon R\ra\mathbb{P}^1$ as described above.

The K3 surface $S$ obtained as above is naturally equipped with one elliptic fibration $\E_S \colon S \to \PP^1$, induced via pullback from $\E_R$.
 In \cite{GS} the
relationship between
elliptic fibrations on
$S$
and linear systems on the rational surface
$R$ (or $\widetilde{R}$) is studied. The elliptic fibrations on $S$ fall into one of the three categories below according to the action of
$\iota$ on the fibers. Note that $\E_S$ belongs to the second one:\\
$\bullet$ if $\iota$ preserves each fiber of the fibration, then
it acts on the fibers as the elliptic involution.  The elliptic
fibration on $S$ is therefore induced by fibrations in rational curves on
$\widetilde{R}$.
We will call these pencils ``conic
bundles" if they are rational fibrations on $R$ and ``generalized
conic bundles" if
they are rational fibrations on $\widetilde{R}$ (but not on $R$);\\
$\bullet$ if $\iota$ preserves the fibration, but not each fiber
of the fibration, this implies that $\iota$ acts on the base of
the fibration (with two fixed points). In this case the elliptic
fibration on $S$ is induced by a pencil of genus 1 curves on $R$,
whose members split in the double cover.  We call these pencils ``splitting genus 1 pencils";\\
$\bullet$ if $\iota$ does not preserve the elliptic fibration, we
call the fibration \defi{of type 3}.
A fibration is of type 3 if and only if the class of the fiber of the fibration, in the N\'eron-Severi group of $S$, is not preserved by $\iota$.


As a result of this classification and of the technique introduced
in \cite{GS}, in good cases one may classify the singular fibers
of all elliptic fibrations $ \E \colon S \to \mathbb{P}^1$  in terms of the
singular fibers of more tractable linear series on $S/\iota$. 
Our focus here is on even finer information: obtaining explicit Weierstrass equations
of elliptic fibrations on such K3 surfaces.  Using Tate's algorithm, one may then read off the singular fibers from the order of vanishing of the coefficients and discriminant of the Weierstrass equation (see, for example, the table on page 41 of \cite{Mi}).
In Sections \ref{subsec: an algorithm (generalized)  conic bundle} and
\ref{subsec:  algorithm splitting genus 1} we give methods and algorithms for determining Weierstrass
equations coming from conic bundles and splitting genus 1 pencils on
rational elliptic surfaces, under some assumptions.  This is our main result.
\begin{theorem}
Let $S$ be a K3 surface arising from the rational elliptic surface
$\E_R\colon R\ra\mathbb{P}^1$ as described above. Let $\E$ be an
elliptic fibration on $S$ that is not of type 3. Then, under certain conditions, one obtains
a Weierstrass equation for $\E$ by applying
\begin{itemize}
\item the Algorithm of Section \ref{subsec: an algorithm (generalized)  conic bundle}  if $\E$ is induced by a (generalized) conic bundle with prescribed properties (see Section 5.2 for details);
\item the Algorithm of Section
\ref{subsec:  algorithm splitting genus 1} if $\E$ is induced by a splitting genus 1 pencil.
\end{itemize}
\end{theorem}

\subsection{Outline of the paper}

We focus particularly on K3 surfaces arising as double covers of
$R_{5,5}$, the elliptic modular surface of level 5. This is the
universal elliptic curve over the modular curve $X_1(5)$ and the
evident map $\E_{R_{5,5}} \colon R_{5,5} \to X_1(5) \simeq \PP^1$
is the unique elliptic fibration.  The fibers of $\E_{R_{5,5}}$
are smooth except for two nodal rational curves (type $I_1$) and
two $5$-gons (type $I_5$); this property also determines $R_{5,5}$
and implies that the Mordell-Weil group $\MW(\E_R)=\Z/5\Z$.  The geometry of this surface
as the total space of a pencil of plane cubics is described in
Section \ref{sec: R55}.

In Section \ref{sec: conic bundles} we
classify the conic bundles on $R_{5,5}$ up to automorphisms.
The main result of this Section is Proposition
\ref{prop: classification conic bundles}.


In the remainder of the paper we study elliptic fibrations on different K3 surfaces arising from $R_{5,5}$ by choosing different base changes.
In Section  \ref{sec: K3 covers of R55} we describe such K3 surfaces,
writing their equations as double covers of $\mathbb{P}^2$, as well as giving the Weierstrass form of the elliptic fibrations induced
by $\E_R$.

In Section \ref{sec:
elliptic fibrations induced by conic bundles} we consider the
elliptic fibrations induced on K3 surfaces by the conic bundles classified in Section \ref{sec: conic bundles}.
We
observe directly that the same conic bundle induces elliptic
fibrations with very different properties on K3 surfaces
according to the choice of the branch curves.
In Sections \ref{sec: the K3 surface S55} and \ref{sec: the K3 surface X} we restrict our attention to two K3
surfaces obtained by choosing maximally different branch fibers:
in Section \ref{sec: the K3 surface S55} we consider the very special case where the branch fibers are $2I_5$ and in Section \ref{sec: the K3 surface X} we consider the generic case where the branch fibers are $2I_0$.

When the double cover is branched over the two $5$-gons, the K3
surface is called $S_{5,5}$ and the involution $\iota$ fixes the
union of $10$ rational curves. There is a unique such K3 surface
possessing such a non--symplectic involution. This K3 surface
admits 13 types of elliptic fibrations, classified by Nishiyama in
\cite{Nish}. In this special case we are able to determine
equations for all elliptic fibrations on $S_{5,5}$ using our
techniques and algorithms. We observe that in this case there are
no fibrations of type 3. The main result of this
Section is Proposition \ref{prop: classification elliptic
fibrations S55}.

When the double cover is branched on two smooth fibers, the K3
surface moves in the 2-dimensional family of the K3 surfaces
admitting an elliptic fibration with a 5-torsion section. We call
a very general member of this family $X_{5,5}$ and using the
lattice-theoretic technique of \cite{Nish} we list all the
admissible configurations of fibers of an elliptic fibration on
$X_{5,5}$ in Table \ref{eq: table elliptic fibrations on X55}
proving Proposition \ref{prop: classification
elliptic fibrations on X55}. In this case the elliptic fibrations
cannot be induced by splitting genus 1 pencils or by generalized
conic bundles.
On the other hand there are plenty
of elliptic fibrations of type 3 and we describe one of them in
detail.

\subsection*{Acknowledgements}

This work was initiated during the Women in Numbers Europe 2 workshop.  We thank the organizers and the Lorentz Center in Leiden for providing such a stimulating research environment.  We also thank Bernd Sturmfels for drawing our attention to the connections with tropical geometry, and the referee for many helpful suggestions and comments that have improved this article.

\section{The surface $R_{5,5}$}\label{sec: R55}

Let $R_{5,5}$ be the elliptic modular surface of level 5.
We know from \cite[Tableau]{Beau} (with coordinates $X = x_0, Y = x_0-x_1, Z = x_2$) that the surface $R_{5,5}$ is the blow-up of $\mathbb{P}^2$ in the
basepoints of the pencil $\mathcal{P}$ of cubics
\begin{equation}\label{eq: pencil of cubics R5511}\lambda x_1x_2(x_0-x_1)+\mu
x_0(x_0-x_1-x_2)(x_0-x_2)=0.\end{equation} We will denote this blow-up morphism by $\beta \colon R_{5,5} \to \mathbb{P}^2$.  The cubic corresponding
to $\lambda=0$ is reducible and consists of the three lines
$m_1: {x_0=0}$,  $m_2: {x_0-x_1-x_2=0}$, $m_3: {x_0-x_2=0}$; the
cubic corresponding to $\mu=0$ is reducible and consists of the
three lines $\ell_1: {x_0-x_1=0}$, $\ell_2: {x_1=0}$, $\ell_3: {x_2=0}$.
\begin{figure}[ht]

\begin{tikzpicture}[scale=.5]


\draw (1,12)node[above right]{$\ell_1$}  -- (-5,0) ;

\draw (-1,12)node[above left]{$\ell_2$}  -- (4,2) ;

\draw (-5,4)--(5,4)node[right]{$\ell_3$};

\draw (-4/3,2)node[below]{$m_1$} -- (1/3,12);

\draw (-5,4-4/5) -- (4,6 + 4/5)node[below right]{$m_2$};

\draw (-5,4/3) -- (5,8)node[above right]{$m_3$};


\draw (3,4) node{$\bullet$};
\draw (3,4) node[above right]{$T_1$};

\draw (-6/7, 34/7) node{$\bullet$};
\draw (-6/7, 34/7) node[above left]{$T_2$};


\draw (0,10) node{$\bullet$};
\draw (0,10) node[left]{$Q_1$};

\draw (-3,4) node{$\bullet$};
\draw (-3,4) node[above left]{$Q_2$};

\draw (-1,4) node{$\bullet$};
\draw (-1,4) node[below right]{$Q_3$};

\draw (2,6) node{$\bullet$};
\draw (2,6) node[below right]{$Q_4$};

\draw (-4,2)node{$\bullet$};
\draw (-4,2)node[below right]{$Q_5$};

\end{tikzpicture}

\caption{The reducible cubics and basepoints of the pencil $\mathcal{P}$.}
\end{figure}
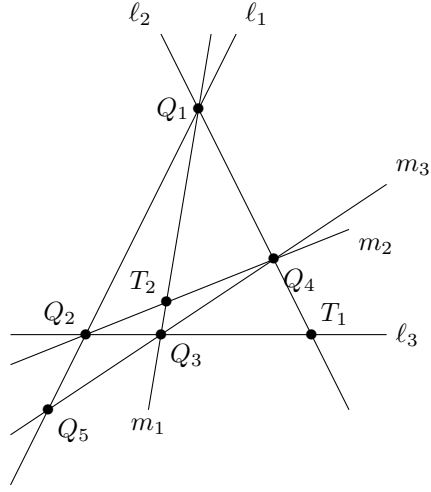

The nine basepoints of this pencil of cubics are: $Q_1:=(0:0:1)$;
the point $Q_1'$, infinitely near to $Q_1$ and corresponding to
the tangent direction $m_1$;  $Q_2:=(1:1:0)$; the point $Q_2'$,
infinitely near to $Q_2$ and corresponding to the tangent
direction $m_2$; $Q_3:=(0:1:0)$; the point $Q_3'$, infinitely near
to $Q_3$ and corresponding to the tangent direction $\ell_3$;
$Q_4:=(1:0:1)$; the point $Q_4'$, infinitely near to $Q_4$ and
corresponding to the tangent direction $\ell_2$; $Q_5:=(1:1:1)$.

In the following we will denote by $T_1$ the point $(1:0:0)$,
which is not a basepoint of the pencil and corresponds to the
intersection of the lines $\ell_2$ and $\ell_3$, and by $T_2$ the point
$(0:1:-1)$, which is not a basepoint of the pencil and
corresponds to the intersection of the lines $m_1$ and $m_2$.

Let $h$ denote the preimage of the class of a line; then $\NS(R_{5,5})$ is spanned by $h$ and the components of the exceptional divisors of the blow up $\beta:R_{5,5}\ra\mathbb{P}^2$.  We will denote the (irreducible) exceptional divisor corresponding to $Q_i$ (resp. $Q_i'$) by $E_i$ (resp. $F_i$) for $i = 1, 2, 3, 4$.  At $Q_5$ there is only $E_5$.  Note that $F_i^2 = -1$, $E_i^2 = -2$ for $i= 1, 2, 3, 4$, $E_5^2=-1$, $E_iE_j=0$ if $i\neq j$, $F_iF_j=0$ if $i\neq j$, $E_iF_i=1$. By slight abuse of notation, let $\ell_1, \ell_2, \ell_3$ and $m_1, m_2, m_3$ denote the proper transforms on $R_{5,5}$ of the corresponding lines in $\PP^2$.
We have the following relations in $\NS(R_{5,5})$:
\begin{align}\begin{array}{lcl}
\ell_1=h-E_1-F_1-E_2-F_2-E_5&& m_1=h-E_1-2F_1-E_3-F_3\\
\ell_2=h-E_1-F_1-E_4-2F_4&& m_2=h-E_2-2F_2-E_4-F_4\\
\ell_3=h-E_1-F_1-E_3-2F_3&& m_3=h-E_3-F_3-E_4-F_4-E_5.\\
\end{array}
\end{align}

The Weierstrass equation of the elliptic fibration of $R_{5,5}$ is obtained by \eqref{eq: pencil of cubics R5511} choosing $x_2=1$ and applying standard transformations. It is
\begin{equation}\label{eq: Weierstrass R5511} y^2=x^3+A(\lambda:\mu)x+B(\lambda:
\mu), \ \ \mbox{where}\end{equation}
 $$A(\mu):=\frac{-1}{48}\mu^4-\frac{1}{4}\mu^3\lambda-\frac{7}{24}\mu^2\lambda^2+\frac{1}{4}\mu\lambda^3-\frac{1}{48}\lambda^4,\mbox{  and }$$
$$B(\mu):=\frac{1}{864}\mu^6+\frac{1}{48}\mu^5\lambda+\frac{25}{288}\mu^4\lambda^2+\frac{25}{288}\mu^2\lambda^4-\frac{1}{48}\mu\lambda^5+\frac{1}{864}\lambda^6. $$
The discriminant is $\frac{1}{16}\mu^5\lambda^5(-\lambda^2+11\mu\lambda+\mu^2)$, so there are two fibers of type $I_5$ over $(\lambda:\mu)=(0:1)$ and $(\lambda:\mu)=(1:0)$. Moreover there are two fibers of type $I_1$ over $(\lambda:\mu)=(1:-\frac{11}{2}\pm\frac{5}{2}\sqrt{5})$.

Now the function
$$\mu\mapsto (x(\lambda:\mu);y(\lambda:\mu)) = \left(\frac{\mu^2-6\mu\lambda+\lambda^2}{12};\frac{\mu^2\lambda}{2}\right)$$ is a
5-torsion section of this fibration. It is known, see e.g.\ \cite[Section 9.5]{SS} that
the elliptic fibration on $R_{5,5}$ has Mordell--Weil group equal
to $\Z/5\Z$.

The negative curves on $R_{5,5}$ are
\begin{enumerate}
\item the ten components of the two fibers of type $I_5$ denoted by $\Theta_0^{(1)}$, $\Theta_1^{(1)}$, $\Theta_2^{(1)}$, $\Theta_3^{(1)}$, $\Theta_4^{(1)}$ on the first fiber and $\Theta_0^{(2)}$, $\Theta_1^{(2)}$, $\Theta_2^{(2)}$, $\Theta_3^{(2)}$, $\Theta_4^{(2)}$ on the second fiber (these are all $(-2)$-curves);
\item the five sections $P_0$, $P_1$, $P_2$, $P_3$, and $P_4$, where $P_0$ meets the components $\Theta_0^{(1)}$ and $\Theta_0^{(2)}$, $P_1$ meets the components $\Theta_1^{(1)}$ and $\Theta_2^{(2)}$, $P_2$ meets the components $\Theta_2^{(1)}$ and $\Theta_4^{(2)}$, $P_3$ meets the components $\Theta_3^{(1)}$ and $\Theta_1^{(2)}$ and $P_4$ meets the components $\Theta_4^{(1)}$ and $\Theta_3^{(2)}$ (these sections are all $(-1)$-curves).
\end{enumerate}

The dual graph of this configuration is given in Figure \ref{negative_curves_R5511}. We observe that the Figure \ref{negative_curves_R5511} is a generalization of the Petersen graph (it is exactly the Petersen graph if one does not consider the empty edges) and we point out that this graph represents several intersecting objects in algebraic geometry and in tropical geometry, see eg.\ \cite{RSS}.
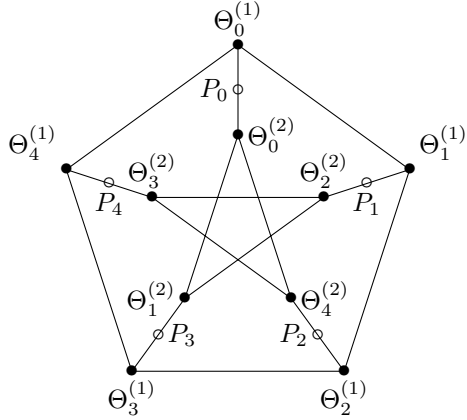
\begin{figure}[h]
\begin{tikzpicture}[scale=.6]

\draw (0,4*1) node{$\bullet$};

\draw (4*0.951, 4*0.309) node{$\bullet$};

\draw (4*0.588, 4*-0.809)node{$\bullet$};

\draw (4*-0.588, 4*-0.809)node{$\bullet$};

\draw (4*-0.951, 4*0.309) node{$\bullet$};

\draw (0,3*1) node{$\circ$};

\draw (3*0.951, 3*0.309) node{$\circ$};

\draw (3*0.588, 3*-0.809)node{$\circ$};

\draw (3*-0.588, 3*-0.809)node{$\circ$};

\draw (3*-0.951, 3*0.309) node{$\circ$};

\draw (0,2*1) node{$\bullet$};

\draw (2*0.951, 2*0.309) node{$\bullet$};

\draw (2*0.588, 2*-0.809)node{$\bullet$};

\draw (2*-0.588, 2*-0.809)node{$\bullet$};

\draw (2*-0.951, 2*0.309) node{$\bullet$};

\draw (0,4*1) --  (4*0.951, 4*0.309) --(4*0.588, 4*-0.809) -- (4*-0.588, 4*-0.809) -- (4*-0.951, 4*0.309) --(0,4);

\draw (0,2*1)--(2*0.588, 2*-0.809) -- (2*-0.951, 2*0.309)  --  (2*0.951, 2*0.309) -- (2*-0.588, 2*-0.809) --(0,2);

\draw (0,4)node[above]{$\Theta_0^{(1)}$} -- (0,3)node[left]{$P_0$} -- (0,2)node[right]{$\Theta_0^{(2)}$};

\draw (4*0.951, 4*0.309)node[above right]{$\Theta_1^{(1)}$} -- (3*0.951, 3*0.309)node[below]{$P_1$}-- (2*0.951, 2*0.309)node[above]{$\Theta_2^{(2)}$};

\draw (4*0.588, 4*-0.809)node[below]{$\Theta_2^{(1)}$} -- (3*0.588, 3*-0.809)node[left]{$P_2$} --(2*0.588, 2*-0.809)node[right]{$\Theta_4^{(2)}$};

\draw (4*-0.588, 4*-0.809)node[below]{$\Theta_3^{(1)}$} -- (3*-0.588, 3*-0.809)node[right]{$P_3$} -- (2*-0.588, 2*-0.809)node[left]{$\Theta_1^{(2)}$};

\draw (4*-0.951, 4*0.309)node[above left]{$\Theta_4^{(1)}$} --(3*-0.951, 3*0.309)node[below]{$P_4$} -- (2*-0.951, 2*0.309)node[above]{$\Theta_3^{(2)}$};

\end{tikzpicture}
\caption{Dual graph of negative curves on $R_{5,5}$.  The symbol $\bullet$ denotes a $(-2)$-curve, and $\circ$ denotes a $(-1)$-curve.}\label{negative_curves_R5511}
\end{figure}

We may then make the following choice of identifications:
\begin{eqnarray}\label{R_5,5 curve ids}
\begin{array}{c c c c c}
\Theta_0^{(1)} = m_1&\Theta_1^{(1)} = E_3&\Theta_2^{(1)} = m_3 &\Theta_3^{(1)} = E_4 &\Theta_4^{(1)} = m_2\\
 \Theta_0^{(2)} = E_1  & \Theta_1^{(2)} = \ell_2& \Theta_2^{(2)} = \ell_3 & \Theta_3^{(2)} = E_2  & \Theta_4^{(2)} = \ell_1\\
 P_0 = F_1  & P_1 = F_3&P_2 = E_5& P_3 = F_4&P_4 = F_2.
\end{array}
\end{eqnarray}


We observe that there is  an automorphism $\sigma_5$ on $R_{5,5}$ of order 5,  which is the translation by the section $P_1$. It acts on the negative curves as follows: $\sigma_5(\Theta_i^{(1)})=\Theta_{i+1}^{(1)}$ and $\sigma_5(\Theta_i^{(2)})=\Theta_{i+2}^{(2)}$, where $i+1$ and $i+2$ are considered modulo $5$; $\sigma_5(P_k)=P_{k+1}$, where $k+1$ is considered modulo $5$.

There is also an automorphism $\sigma_2$ of order 2 on $R_{5,5}$
which is the elliptic involution on the elliptic curve \eqref{eq: Weierstrass R5511}
over the function field $k(\mu)$.
Note that $\sigma_2$
restricts to the elliptic involution on each smooth fiber of the fibration \eqref{eq: Weierstrass R5511}. It acts on the negative curves as follows: $\sigma_2(\Theta_i^{(j)})=\Theta_{-i}^{(j)}$, where $i\in\Z/5\Z$, $j=1,2$ and $\sigma_2(P_k)=P_{-k}$, where $k\in\Z/5\Z$.

There is also an automorphism $\alpha$ of $R_{5,5}$ lying above the involution of $\mathbb{P}^1$ $\alpha\colon (\lambda:\mu)\mapsto(-\mu:\lambda)$.  In terms of the Weierstrass equation \eqref{eq: Weierstrass R5511}, we have
\[\alpha \colon (x, y, \mu) \mapsto (x/\mu^2, -y/\mu^3, -1/\mu). \]
Note that the automorphism $\sigma_5^2 \alpha \sigma_5^3$ is induced by the element 
$\left(\begin{smallmatrix} 0 & 1 & 0 \\ -1 & 1 & 1 \\ 1 & 0 & 0 \end{smallmatrix}\right) \in \PGL_3(\C)$.  From this description, the action on $\NS(R_{5,5})$ is apparent: $\alpha(\Theta^{(1)}_0) = \Theta^{(2)}_0$ and $\alpha(\Theta^{(2)}_0) = \Theta^{(1)}_0$; $\alpha(\Theta^{(1)}_i) = \Theta^{(2)}_i$ and $\alpha(\Theta^{(2)}_i) = \Theta^{(1)}_{-i}$; and finally $\Theta(P_0) = P_0$ while the remaining sections are permuted so as to preserve intersections.

 
\section{Conic bundles on $R_{5,5}$}\label{sec: conic bundles}

In this section we classify the conic bundles on $R_{5,5}$ by considering their reducible fibers proving Proposition \ref{prop: classification conic bundles}

The key result we use is that on a rational elliptic surface, every conic
bundle has at least one reducible fiber. Further, any reducible
fiber must be of type $A_n$ or $D_m$, as shown in Figure
\ref{RES_conic_reducible_fibers}, see e.g. \cite{GS}.

\begin{figure}[h]\label{RES_conic_reducible_fibers}
\begin{tikzpicture}

\draw (0,0) node{$\circ$};
\draw (0,0) -- (1,0)--(2,0);
\draw (1,0) node{$\bullet$};
\draw (2,0) node{$\bullet$};
\draw[dashed] (2,0) -- (3,0);
\draw (3,0) node{$\bullet$};
\draw (4,0) node{$\circ$};
\draw (3,0) --(4,0);

\draw (2,-1) node{$A_n$};

\draw (6,0.5) node{$\bullet$};
\draw (6,-0.5) node{$\bullet$};
\draw (6,0.5) -- (7,0)--(8,0);
\draw (6,-0.5) -- (7,0);
\draw (7,0) node{$\bullet$};
\draw (7,0) node[above]{\small{2}};
\draw (8,0) node{$\bullet$};
\draw (8,0) node[above]{\small{2}};
\draw[dashed] (8,0) -- (9,0);
\draw (9,0) node{$\bullet$};
\draw (9,0) node[above]{\small{2}};
\draw (10,0) node{$\circ$};
\draw (10,0) node[above]{\small{2}};
\draw (9,0) --(10,0);

\draw (8,-1) node{$D_m$};
\end{tikzpicture}
\caption{Possible reducible fibers of conic bundles on (minimal) rational elliptic surfaces.  The number $n$ and $m$ refer to the number of components.  Multiplicity of a component is indicated above the corresponding vertex if it is not 1.   The symbol $\bullet$ denotes a $(-2)$-curve, and $\circ$ denotes a $(-1)$-curve.}\label{RES_conic_reducible_fibers}
\end{figure}
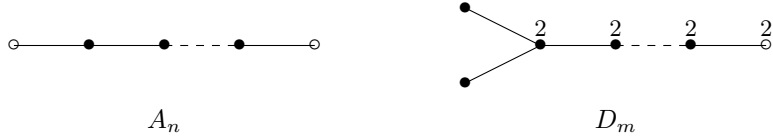

\begin{proposition}\label{prop: classification conic bundles} There are exactly three conic bundles on $R_{5,5}$ up to automorphisms, i.e. the conic bundles $B_1$, $B_2$ and $B_3$ induced by the pencils of plane rational curves with equations \ref{eq: B_1}, \ref{eq: B_2} and \ref{eq: B_3} respectively.
\end{proposition}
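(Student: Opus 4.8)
We need to classify conic bundles on $R_{5,5}$ up to automorphisms. A conic bundle on a rational surface is a fibration in rational curves (genus 0). On the blow-up of $\mathbb{P}^2$, these typically come from pencils of conics or lines, whose classes in the Néron-Severi group satisfy certain properties.

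**What defines a conic bundle class:** A fiber class $C$ of a conic bundle (rational fibration) on a surface with $K$ the canonical class satisfies $C^2 = 0$ (fibers don't intersect each other) and $C \cdot K = -2$ (by adjunction, genus 0 fibers). So the main task is to enumerate classes $C \in \text{NS}(R_{5,5})$ with $C^2 = 0$, $C \cdot K = -2$, that are effective and base-point-free (or the associated linear system moves in a pencil), then group them into orbits under the automorphism group.

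**Key facts available:** We know $\text{NS}(R_{5,5})$ is spanned by $h$ and the exceptional divisors $E_i, F_i$, with the explicit intersection form given. We also know the automorphism group contains at least $\sigma_5$ (order 5) and $\sigma_2$ (order 2) with explicit action on the negative curves, and the full configuration of ten $(-2)$-curves (the two $I_5$ fibers) plus five $(-1)$-curves (the sections) is laid out in Figure 2.

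**My proof strategy:**

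First, I would compute $K = K_{R_{5,5}}$. Since $R_{5,5}$ is a rational elliptic surface, $-K$ equals the class of a fiber $f = 3h - \sum E_i - \sum F_i$ (adjusted for the infinitely-near structure). Then the conic bundle classes are solutions to the Diophantine system $C^2 = 0$, $C \cdot K = -2$ in the lattice $\text{NS}(R_{5,5}) \cong \mathbb{Z}^{10}$.

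Second, I would reduce the infinite solution set to finitely many orbits. The subtlety is that there are infinitely many classes $C$ satisfying $C^2=0, C\cdot K = -2$ (translating by the elliptic fiber or using Mordell-Weil translations produces new ones), so I must quotient by the automorphism group. The cleanest approach: any conic bundle fiber class $C$ decomposes its reducible fibers into the negative curves. I would classify conic bundles by how their reducible fibers are built from the twelve distinguished negative curves displayed in Figure 2 — this is precisely the "considering their reducible fibers" method the proposition statement alludes to.

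Third, for a rational elliptic surface, a conic bundle has fiber class $C$ with $C \cdot f = 2$ (where $f$ is the elliptic fiber), so the conic bundle meets each elliptic fiber in two points, and I'd analyze how reducible conic-bundle fibers interact with the two $I_5$ fibers. The reducible fibers of the conic bundle are chains/pairs of negative curves summing to $C$; enumerating the combinatorially possible splittings among the $\Theta^{(j)}_i$ and $P_k$ gives a finite list. Then $\sigma_5$ and $\sigma_2$ act on this list, and I collapse to orbits.

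Finally, for each of the four orbit representatives I'd produce the explicit plane pencil (equations \ref{eq: B_1}–\ref{eq: B_4}) by pushing the class $C$ down via $\beta_*$ to a linear system of curves in $\mathbb{P}^2$ through the prescribed base points, realizing that $B_i$ concretely.

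**Main obstacle:** The hard part will be the finiteness/orbit-counting step — proving the list of reducible-fiber configurations is exhaustive (no conic bundle is missed) and that exactly four automorphism orbits result, rather than merely exhibiting four examples. Controlling this requires carefully using the intersection constraints $C \cdot \Theta, C \cdot P \geq 0$ for all negative curves together with $C^2 = 0$, $C \cdot f = 2$, to bound the multiplicities, and then verifying the $\langle \sigma_5, \sigma_2 \rangle$-action identifications are complete. The combinatorics of the $I_5 + I_5$ fiber configuration (a decagon of $(-2)$-curves glued by the five sections) is what makes this tractable but tedious.

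(Here I keep the description high-level; the full enumeration and the explicit equations \ref{eq: B_1}–\ref{eq: B_4} would be carried out in the detailed proof.)
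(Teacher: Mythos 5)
Your strategy is essentially the paper's: enumerate reducible fibers of putative conic bundles among the finitely many negative curves (the ten $\Theta_i^{(j)}$ and five $P_k$), use the known automorphisms $\sigma_5,\sigma_2$ to identify equivalent bundles, and push the classes down by $\beta_*$ to obtain the plane pencils \eqref{eq: B_1}--\eqref{eq: B_4}. However, your outline has two gaps, one of which you flag and one of which you do not.

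The flagged gap is exhaustiveness. The paper does not rederive this from intersection inequalities as you propose; it invokes the structural theorem of \cite{GS}: on a (relatively minimal) rational elliptic surface every conic bundle has at least one reducible fiber, and every reducible fiber is of type $A_n$ (a chain with a $(-1)$-curve at each end and $(-2)$-curves in between, all of multiplicity $1$) or of type $D_m$ (a fork with two multiplicity-one $(-2)$-curves attached to a multiplicity-two chain ending in a $(-1)$-curve). This is exactly what makes the finite search among the fifteen negative curves of $R_{5,5}$ provably complete; your plan of ``bounding multiplicities'' by positivity constraints is plausible but is not carried out, and without the $A_n$/$D_m$ structure theorem (or a proof of it) the enumeration is not known to catch every conic bundle.

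The unflagged gap is more serious for the statement as phrased: you only collapse your list into orbits under the subgroup $\langle\sigma_5,\sigma_2\rangle$, but the proposition asserts the count of orbits under the \emph{full} automorphism group. Collapsing under known automorphisms yields an upper bound of four orbits; to get \emph{exactly} four you must show the representatives are pairwise inequivalent under every automorphism. For $B_1$ ($2A_4+D_3$), $B_2$ ($2A_5$) and $B_3$ ($A_6+D_4$) this is immediate since automorphisms preserve reducible-fiber types, but $B_3$ and $B_4$ have the \emph{same} configuration $A_6+D_4$, so fiber types cannot separate them. The paper needs a dedicated argument here: an automorphism carrying $B_3$ to $B_4$ would have to interchange the two $I_5$ fibers of $\E_{R_{5,5}}$, and this is impossible because the incidence pattern of the five sections with the components of the first pentagon ($P_i$ meets $\Theta_i^{(1)}$) is not equivalent to the pattern with the second ($P_i$ meets $\Theta_{2i}^{(2)}$). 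Your proposal never addresses this step, so as written it proves ``at most four,'' not ``exactly four.''
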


\proof {\bf Step 1: classification of the reducible fibers.} Every
conic bundle has at least one reducible fiber, so in order to
classify the conic bundles it suffices to find all the possible
reducible fibers. The components of the reducible fibers are
negative curves. As $R_{5,5}$ (and in fact any extremal rational
elliptic surface, see \cite[VIII.1.2]{Mi}) has only finitely many curves of negative
self-intersection, one simply must find all possible $A_n$ and
$D_m$ (for $m \ge 3$) configurations among the curves with
negative self intersection.

Every reducible fiber contains at least a $(-1)$-curve, as shown
in Figure \ref{RES_conic_reducible_fibers}. Since we are looking
for a classification up to automorphisms, and the automorphism
$\sigma_5$ permutes the $(-1)$-curves on $R_{5,5}$, we can always
assume that one of the reducible fibers of the conic bundle
contains the $(-1)$-curve $P_0$. Moreover we recall that the
action of $\sigma_2$ switches $\Theta_i^{(j)}$ with
$\Theta_{5-i}^{(j)}$ and the action of $\alpha$ switches the two $I_5$-fibers. Up to the action of $\sigma_2$ and $\alpha$, the
reducible fibers of type $A_n$ that contain $P_0$ as a component
are necessarily of one of
the following type:\\
$\bullet$ $P_0+\sum_{i=0}^k\Theta_i^{(1)}+P_k$, $k=1,2,3$ and in
this case we have a fiber of type $A_{k+3}$.\\

Up to the action of $\sigma_2$ the reducible fibers of type $D_m$
that contain $P_0$ as a
component are necessarily of the following types:\\
$\bullet$ $2P_0+\Theta_0^{(1)}+\Theta_0^{(2)}$ and in
this case we have a fiber of type $D_3$;\\
$\bullet$ $2P_0+2\Theta_0^{(1)}+\Theta_1^{(1)}+\Theta_4^{(1)}$ and
in
this case we have a fiber of type $D_4$.\\

{\bf Step 2: at most three conic bundles.} We consider the conic
bundles associated to the possible singular fibers described above
and show that some of them are equivalent up to the action of
$\sigma_5$ and $\sigma_2$. Moreover we describe the conic bundles
that we find and we gives equations for them:\\

$(1)$ $\vert B_1\vert = \vert P_0 + \Theta_0^{(1)} +
\Theta_1^{(1)} + P_1\vert$. The components of one reducible fiber
are $P_0$, $\Theta_0^{(1)}$, $\Theta_1^{(1)}$, $P_1$. This fiber
is of type $A_4$. The curves $\Theta_2^{(1)}$, $\Theta_4^{(1)}$,
$\Theta_0^{(2)}$ and $\Theta_2^{(2)}$ are sections of the bundle.
There is another reducible fiber of type $A_4$ which is formed by
the curves $P_2$, $\Theta_4^{(2)}$, $\Theta_3^{(2)}$, $P_4$, and
one of type $D_3$ which is formed by $P_3$ (with multiplicity 2),
$\Theta_3^{(1)}$, $\Theta_1^{(2)}$.

Using the identifications made earlier, this class can also be written as:
\begin{align*}
B_1 &= F_1 + m_1 + E_3 + F_3 = F_1 + (h - E_1 - 2F_1 - E_3 - F_3) + E_3 + F_3 = h - E_1 - F_1.
\end{align*}
Unwinding what this means geometrically: $\vert B_1 \vert$ comes via proper transform from the pencil of lines through $Q_1$ in $\PP^2$ which has equation \begin{equation}\label{eq: B_1}
x_1=\tau x_0.\end{equation} Under this description we can also understand the singular fibers: they correspond to the special lines $m_1, \ell_1$, and $\ell_2$.  For example the line $\ell_2$ corresponds to the reducible fiber $\beta^*(\ell_2) -E_1 - F_1 = \ell_2 + E_4 + 2F_4 = \Theta_1^{(2)} + \Theta_3^{(1)} + 2P_3$.

The conic bundle $\vert B_1\vert$ is sent to other conic bundles
by $\sigma_5$, by $\sigma_2$ and by their powers. Each of these
has exactly three reducible fibers of types $A_4$, $A_4$ and
$D_3$.

We observe that the fiber of type $D_3$ of the conic bundle
$|B_1|$ is sent to $2P_0+\Theta_0^{(1)}+\Theta_2^{(2)}$ by the
automorphism $\sigma_5^2$, so the conic bundle with reducible
fiber $2P_0+\Theta_0^{(1)}+\Theta_0^{(2)}$ is equivalent to
$|B_1|$ up to automorphisms.

Similarly the $A_4$-fiber $P_2+\Theta_4^{(2)}+\Theta_3^{(2)}+P_4$
is sent to $P_3+\Theta_1^{(2)}+\Theta_0^{(2)}+P_0$ by
$\sigma_5$, so also the conic bundle with reducible fiber
$P_0+\Theta_0^{(2)}+\Theta_1^{(2)}+P_3$ is equivalent to $|B_1|$
up to automorphisms.

$(2)$ $\vert B_2 \vert=\vert
P_0+\Theta_0^{(1)}+\Theta_1^{(1)}+\Theta_2^{(1)}+P_2\vert$.  The
components of one reducible fiber are $P_0$, $\Theta_0^{(1)}$,
$\Theta_1^{(1)}$, $\Theta_2^{(1)}$, $P_2$. This fiber is of type
$A_5$. The curves $\Theta_3^{(1)}$, $\Theta_4^{(1)}$,
$\Theta_0^{(2)}$, $\Theta_4^{(2)}$ and $P_1$ are sections of the
bundle. There is another reducible fiber of type $A_5$ which is
formed by the curves $P_3$, $\Theta_1^{(2)}$, $\Theta_2^{(2)}$,
$\Theta_3^{(2)}$, $P_4$.

Similarly here we can write:
\begin{align*}
B_2 &= F_1 + m_1 + E_3 +m_3 +E_5 
= 2h - E_1 - F_1 - E_3 - 2F_3 - E_4 - F_4.
\end{align*}
Hence $B_2$ corresponds to the pencil of conics through $Q_1, Q_3, Q_3'$, and $Q_4$.  More explicitly this is given by conics passing through $Q_1$ and $Q_4$, and tangent to $\ell_3$ at $Q_3$ and in $\PP^2$ this pencil is given by the equation \begin{equation}\label{eq: B_2} x_1x_2=\tau ( x_0x_2-x_0^2).
\end{equation} The two reducible fibers correspond to the reducible conics $m_1 \cup m_3$ and $\ell _2 \cup \ell_3$.

The $A_5$-fiber of $|B_2|$ whose components are $P_3$,
$\Theta_1^{(2)}$, $\Theta_2^{(2)}$, $\Theta_3^{(2)}$, $P_4$ is
sent to the reducible fiber $P_0+\sum_{i=0}^2 \Theta_i^{(2)}+P_1$
by $\sigma_5^2$.

$(3)$ $\vert B_3\vert=\vert
P_0+\Theta_0^{(1)}+\Theta_1^{(1)}+\Theta_2^{(1)}+\Theta_3^{(1)}+P_3\vert$.
The components of one reducible fiber are $P_0$, $\Theta_0^{(1)}$,
$\Theta_1^{(1)}$, $\Theta_2^{(1)}$, $\Theta_3^{(1)}$, $P_3$. This
fiber is of type $A_6$. The curves $\Theta_0^{(2)}$,
$\Theta_1^{(2)}$, $P_1$ and $P_2$ are sections of the bundle. The
curve $\Theta_4^{(1)}$ is a multisection of degree $2$.  There is
another reducible fiber of type $D_4$ which is formed by the
curves $P_4$, $\Theta_3^{(2)}$, $\Theta_4^{(2)}$,
$\Theta_2^{(2)}$.

We can also describe this using:
\begin{align*}
B_3 &= F_1 + m_1 + E_3 +m_3 +E_4 + F_4
= 2h - E_1 - F_1 - E_3 - 2F_3 - E_5.
\end{align*}
Therefore $B_3$ comes from the pencil of conics in $\PP^2$ through
$Q_1, Q_3, Q_3'$ and $Q_5$; that is conics through $Q_1$ and
$Q_5$, tangent to $\ell_3$ at $Q_3$, and in $\PP^2$ this pencil is
given by the equation
\begin{equation}\label{eq: B_3}x_1x_2=(\tau+1) x_0x_2-\tau x_0^2.\end{equation} We can again understand the reducible fibers as coming from reducible conics $\ell_1 \cup \ell_3$ and $m_1 \cup m_3$.

The $D_4$-fiber is sent to the fiber
$2P_0+2\Theta_0^{(2)}+\Theta_1^{(2)}+\Theta_4^{(2)}$ by
$\sigma_5$.





{\bf Step 3: exactly three conic bundles.} It remains only to prove
that the conic bundles $B_i$ for $i=1,2,3$ are all inequivalent
up to automorphisms. Since the reducible fibers of $\vert
B_1\vert$ are ($2A_4,D_3$), the reducible fibers of $\vert
B_2\vert$ are ($2A_5$) and the reducible fibers of $\vert
B_3\vert$ are ($A_6,D_4$), they cannot be equivalent up to
automorphisms. Hence there are three
conic bundles on $R_{5,5}$ up to automorphisms.\endproof

\section{K3 surfaces obtained by $R_{5,5}$}\label{sec: K3 covers of R55}

Now we consider K3 surfaces obtained from $R_{5,5}$ by a base
change of order 2 branched on two fibers. Of course the K3 surface
obtained depends on the branch fibers. Let us explicitly give the
description of the K3 surfaces that we can obtain in this way.
They will be both described as elliptic fibrations (induced by the
one of $R_{5,5}$) and as double covers of $\mathbb{P}^2$.

\subsection{The branch fibers are $2I_5$: the K3 surface $S_{5,5}$}

Let us consider the K3 surface $S_{5,5}$ obtained by a base change of order 2 of $R_{5,5}$ whose branch locus corresponds to the two fibers of type $I_5$. This means that all the components of the fibers of type $I_5$ are in the branch locus of the double cover $S_{5,5}\dashrightarrow R_{5,5}$.

\subsubsection{The surface $\widetilde{R}$}

The double cover of $R_{5,5}$ branched over the two fibers of type $I_5$ has ordinary double point singularities at the 10 points over the nodes in the branch fibers.  In order to obtain a K3 surface one can blow-up these $10$ points on the double cover, introducing 10 exceptional divisors.   Equivalently one may first blow-up the $10$ nodes of the branch fibers to obtain a non-minimal rational elliptic surface $\widetilde{R}$ and then normalize the double cover of this surface branched over the preimage of the branch fibers.  Note that in the preimage the $10$ exceptional curves all occur with multiplicity $2$, and so they are not in the branch locus after normalization.


We will make use of this non-minimal rational elliptic surface $\widetilde{R}$; it is simply the blow-up of $\mathbb{P}^2$ in $9+10$ points, some of which are infinitely near to each other. The $10$ additional points are $T_1$, $T_2$ and the $2$ points on each of the exceptional divisors $E_i$ for $i=1,2,3,4$ corresponding to the tangent directions at $Q_i$ specified by respectively $\ell_1$ and $\ell_2$,  $\ell_1$ and $\ell_3$, $m_1$ and $m_3$, and finally $m_2$ and $m_3$.

We will denote by $E_{Q_5}$, $E_{T_1}$ and $E_{T_2}$ the exceptional divisors over $Q_5$, $T_1$ and $T_2$ respectively. These three divisors are $(-1)$-curves.

We will denote by $E_{i}$,
 $F_{i}$, $G_i$ and $H_i$ the four exceptional divisors over $Q_i$ for $i=1,2,3,4$. For each $i$, the divisor $E_i$ is a $(-4)$-divisor intersecting $F_i$, $G_i$, and $H_i$, which are orthogonal $(-1)$-curves.  We make the identification that $F_i$ is the tangent direction corresponding to the basepoint of the pencil of cubics,
 and $H_i$ and $G_i$ correspond to the tangent directions specified in the following table:

\begin{center}
\begin{tabular}{c | c c c c}
$i$ & 1 & 2 & 3 & 4 \\ \hline
$G_i$ & $\ell_1$ & $\ell_3$ & $m_1$ & $m_3$ \\
$H_i$ & $\ell_2$ & $\ell_1$ & $m_3$ & $m_2$.
\end{tabular}
\end{center}
Note that the $E_i$ and $F_j$ are the strict transforms of the curves of the same name on $R_{5,5}$.

The strict transforms of the lines $\ell_j$ and $m_j$, $j=1,2,3$ on $\widetilde{R}$ are the following:
$$\begin{array}{ccc} \ell_1:&=&h-E_{1}-F_1-2G_1-H_1-E_2-F_2-G_2-2H_2-E_{Q_5};\\
 \ell_2:&=&h-E_{1}-F_1-G_1-2H_1-E_4-2F_4-G_4-H_4-E_{T_1};\\
\ell_3:&=&h-E_{2}-F_2-2G_2-H_2-E_3-2F_3-G_3-H_3-E_{T_1};\\
 m_1:&=&h-E_{1}-2F_1-G_1-H_1-E_3-F_3-2G_3-2H_3-E_{T_2};\\
 m_2:&=&h-E_{2}-2F_2-G_2-H_2-E_4-F_4-G_4-2H_4-E_{T_2};\\
 m_3:&=&h-E_{3}-F_3-G_3-2H_3-E_4-F_4-2G_4-H_4-E_{Q_5}.\end{array}$$
The sections of the non--relatively minimal fibration on $\widetilde{R}$ are $F_j$, $j=1,2,3,4$ and $E_{Q_5}$ (i.e. the strict transform of the sections of the fibration on $R_{5,5}$).
\subsubsection{Geometric description of $S_{5,5}$ and its N\'eron--Severi group}

The surface $S_{5,5}$ admits a non-symplectic involution $\iota$ which is the cover involution of the double cover $S_{5,5}\rightarrow \widetilde{R}$. This involution fixes $10$ rational curves (the curves $m_i$, $\ell_i$,$ i=1,2,3$ and $E_j$ with $j=1,2,3,4$) and it acts trivially on the N\'eron-Severi group.

The elliptic fibration $\mathcal{E}_{S_{5,5}}\colon
S_{5,5}\rightarrow \PP^1$ induced by $\mathcal{E}_{R_{5,5}}\colon
R_{5,5}\rightarrow \PP^1$ has two fibers of type $I_{10}$ (induced
by the fibers of type $I_5$ on $R_{5,5}$) and four other singular
fibers, all of type $I_1$. The trivial lattice of the fibration
(generated by the class of the generic fiber, the class of the
zero section and the classes of the non trivial components of the
reducible fibers) has rank 20. The trivial lattice is a sublattice
of the N\'eron--Severi group, and since the N\'eron--Severi group
of a K3 surface has rank at most 20, we conclude that it is
exactly 20. By the Shioda-Tate formula there are no sections of
infinite order for the fibration $\mathcal{E}_{S_{5,5}}\colon
S_{5,5}\rightarrow\PP^1$. The $5$-torsion sections of the
fibration on $R_{5,5}$ induce $5$-torsion sections of
$\mathcal{E}_{S_{5,5}}$. Hence,
$\MW(\mathcal{E}_{S_{5,5}})\supseteq\mathbb{Z}/5\mathbb{Z}$. The
possible torsion parts of the Mordell--Weil group of an elliptic
fibration on a K3 surface are $\Z/n\Z$ for $2\leq n\leq 8$, and
$\Z/2\Z\times \Z/m\Z$, for $m=2,4,6$, $(\Z/k\Z)^2$ for $k=3,4$
(see for example \cite[Thm 7.1]{Shimada}). So we conclude that
$\MW(\mathcal{E}_{S_{5,5}})=\mathbb{Z}/5\mathbb{Z}$.

The curves $\Theta_i^{(j)}$ are in the branch locus and we denote by $\Omega_i^{(j)}$ the rational curve on $S_{5,5}$ which maps $1:1$ to $\Theta_i^{(j)}$. Moreover, we have 10 other rational curves on $S_{5,5}$ : the curves $\Omega_{i_1,i_1-1}^{(j)}$, for $i_1, i_1-1\in \Z/5\Z$, $j=1, 2$, which are the curves resolving the singularities of the intersection point between $\Theta_{i_1}^{(j)}$ and $\Theta_{i_2}^{(j)}$ and are the double cover of the 10 exceptional curves of the blow up $\widetilde{R}\ra R_{5,5}$. The curves $P_i$ are not in the branch locus and we denote by $Q_i$ their $2:1$ cover in $S_{5,5}$. The dual graph of this configuration is given in Figure
\ref{negative_curves_S5511}. We observe that this gives exactly
the diagram given in \cite[Figure 1]{Vin} as dual graph of certain rational curves on the K3 surface whose transcendental lattice is $\langle 2\rangle^2$, which is a different way to describe the surface
$S_{5,5}$.
\begin{figure}\label{negative_curves_S5511}
\begin{tikzpicture}[scale=.7]

\draw (0,4*1) node{$\bullet$};

\draw (4*0.951, 4*0.309) node{$\bullet$};

\draw (4*0.588, 4*-0.809)node{$\bullet$};

\draw (4*-0.588, 4*-0.809)node{$\bullet$};

\draw (4*-0.951, 4*0.309) node{$\bullet$};

\draw (0,3*1) node{$\bullet$};

\draw (3*0.951, 3*0.309) node{$\bullet$};

\draw (3*0.588, 3*-0.809)node{$\bullet$};

\draw (3*-0.588, 3*-0.809)node{$\bullet$};

\draw (3*-0.951, 3*0.309) node{$\bullet$};

\draw (0,2*1) node{$\bullet$};

\draw (2*0.951, 2*0.309) node{$\bullet$};

\draw (2*0.588, 2*-0.809)node{$\bullet$};

\draw (2*-0.588, 2*-0.809)node{$\bullet$};

\draw (2*-0.951, 2*0.309) node{$\bullet$};

{\scriptsize

\draw (0, 0.618) node{$\bullet$};
\draw (0, 0.618) node[above]{$\Omega_{3,2}^{(2)}$};

\draw (0.618*0.951, 0.618*0.309) node{$\bullet$};
\draw (0.618*0.951, 0.618*0.309+.18) node[right]{$\Omega_{4,0}^{(2)}$};

\draw (0.618*0.588, 0.618*-0.809)node{$\bullet$};
\draw (0.618*0.588-.23, 0.618*-0.809+.05)node[below right]{$\Omega_{2,1}^{(2)}$};

\draw (0.618*-0.588, 0.618*-0.809)node{$\bullet$};
\draw (0.618*-0.588+.16, 0.618*-0.809+.05)node[below left]{$\Omega_{4,3}^{(2)}$};

\draw (0.618*-0.951, 0.618*0.309) node{$\bullet$};
\draw (0.618*-0.951, 0.618*0.309+.18) node[left]{$\Omega_{1,0}^{(2)}$};

\draw (0, 2*-1.618) node{$\bullet$};
\draw (0, 2*-1.618) node[below]{$\Omega_{3,2}^{(1)}$};

\draw (2*-1.618*0.951, 2*-1.618*0.309) node{$\bullet$};
\draw (2*-1.618*0.951, 2*-1.618*0.309) node[below left]{$\Omega_{4,3}^{(1)}$};

\draw (2*-1.618*0.588, 2*-1.618*-0.809)node{$\bullet$};
\draw (2*-1.618*0.588, 2*-1.618*-0.809) node[above left]{$\Omega_{4,0}^{(1)}$};

\draw (2*-1.618*-0.588, 2*-1.618*-0.809)node{$\bullet$};
\draw (2*-1.618*-0.588, 2*-1.618*-0.809) node[above right]{$\Omega_{1,0}^{(1)}$};

\draw (2*-1.618*-0.951, 2*-1.618*0.309) node{$\bullet$};
\draw (2*-1.618*-0.951, 2*-1.618*0.309) node[below right]{$\Omega_{2,1}^{(1)}$};

}
\draw (0,4*1) --  (4*0.951, 4*0.309) --(4*0.588, 4*-0.809) -- (4*-0.588, 4*-0.809) -- (4*-0.951, 4*0.309) --(0,4);

\draw (0,2*1)--(2*0.588, 2*-0.809) -- (2*-0.951, 2*0.309)  --  (2*0.951, 2*0.309) -- (2*-0.588, 2*-0.809) --(0,2);

\draw (0,4)node[above]{$\Omega_0^{(1)}$} -- (0,3)node[left]{$Q_0$} -- (0,2)node[right]{$\Omega_0^{(2)}$};

\draw (4*0.951, 4*0.309)node[above right]{$\Omega_1^{(1)}$} -- (3*0.951, 3*0.309)node[below]{$Q_1$}-- (2*0.951, 2*0.309)node[above]{$\Omega_2^{(2)}$};

\draw (4*0.588, 4*-0.809)node[below]{$\Omega_2^{(1)}$} -- (3*0.588, 3*-0.809)node[left]{$Q_2$} --(2*0.588, 2*-0.809)node[right]{$\Omega_4^{(2)}$};

\draw (4*-0.588, 4*-0.809)node[below]{$\Omega_3^{(1)}$} -- (3*-0.588, 3*-0.809)node[right]{$Q_3$} -- (2*-0.588, 2*-0.809)node[left]{$\Omega_1^{(2)}$};

\draw (4*-0.951, 4*0.309)node[above left]{$\Omega_4^{(1)}$} --(3*-0.951, 3*0.309)node[below]{$Q_4$} -- (2*-0.951, 2*0.309)node[above]{$\Omega_3^{(2)}$};

\end{tikzpicture}
\caption{Dual graph of relevant negative curves on $S_{5,5}$.}\label{negative_curves_S5511}

\end{figure}
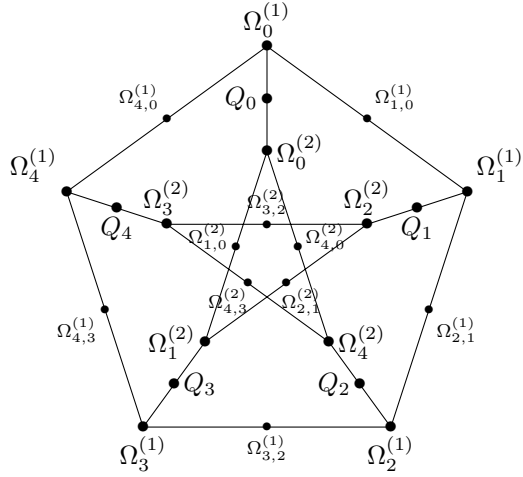

Let $\pi \colon S_{5,5} \to \widetilde{R}$ denote the double cover.  As can be deduced from \eqref{R_5,5 curve ids} and the above identifications, pushing forward curve classes has the following effect:

\begin{align}\label{tildeR curve ids}
\begin{array}{c c c c c}
\pi_*\Omega_0^{(1)} = m_1 &\pi_*\Omega_{3,2}^{(1)}=2G_4 & \pi_*\Omega_0^{(2)} = E_1 &\pi_*\Omega_{3,2}^{(2)}=2G_2& \pi_*Q_0 = 2F_1 \\
\pi_*\Omega_{1,0}^{(1)}=2G_3 & \pi_*\Omega_3^{(1)} = E_4 & \pi_*\Omega_{1,0}^{(2)}=2H_1 &\pi_*\Omega_3^{(2)} = E_2 & \pi_*Q_3 = 2F_4\\
\pi_*\Omega_1^{(1)} = E_3 & \pi_*\Omega_{4,3}^{(1)}=2H_4 & \pi_*\Omega_1^{(2)} = \ell_2 & \pi_*\Omega_{4,3}^{(2)}=2H_2 & \pi_*Q_1 = 2F_3\\
\pi_*\Omega_{2,1}^{(1)}=2H_3 &\pi_*\Omega_4^{(1)} = m_2 & \pi_*\Omega_{1,0}^{(2)}=2E_{T_1} &  \pi_*\Omega_4^{(2)} = \ell_1 & \pi_*Q_4 = 2F_2 \\
\pi_*\Omega_2^{(1)} = m_3 & \pi_*\Omega_{4,0}^{(1)}=2E_{T_2} & \pi_*\Omega_2^{(2)} = \ell_3 & \pi_*\Omega_{4,0}^{(2)}=2G_1& \pi_*Q_2 = 2E_5.
\end{array}
\end{align}



\subsubsection{Weierstrass equation of $S_{5,5}$}\label{subsec: Weierstrass S55}
By the Weierstrass equation \eqref{eq: Weierstrass R5511} the
fibers of $\E_{R_{5,5}}$ of type $I_5$  are the fibers over $\mu=0$ and $\mu=\infty$.
So the base change branched on these fibers is given by $\mu\ra
\mu^2$ and the elliptic fibration on $S_{5,5}$ induced by the one on
$R_{5,5}$ is
\begin{equation}\label{eq: Weierstrass S, induced by R}
y^2=x^3+A(\mu)x+B(\mu), \ \ \mbox{where}\end{equation}
$$A(\mu):=\frac{-1}{48}\mu^8-\frac{1}{4}\mu^6-\frac{7}{24}\mu^4+\frac{1}{4}\mu^2-\frac{1}{48},\mbox{  and }$$
$$B(\mu):=-\frac{1}{864}\mu^{12}-\frac{1}{48}\mu^{10}-\frac{25}{288}\mu^8-\frac{25}{288}\mu^4+\frac{1}{48}\mu^2-\frac{1}{864}. $$
The discriminant is $\frac{1}{16}\mu^{10}(-1+11\mu^2+\mu^4)$, so there are, as expected, two fibers of type $I_{10}$ over $\mu=0$ and $\mu=\infty$. Moreover there are four fibers of type $I_1$ over $\mu=\pm\sqrt{-\frac{11}{2}\pm\frac{5}{2}\sqrt{5}}$.

\subsubsection{Double cover of $\mathbb{P}^2$}\label{subsec: S55 double cover P2}

On the other hand, $\widetilde{R}$ and $R_{5,5}$ are blow-ups of $\mathbb{P}^2$ and
the branch fibers of $\pi:S_{5,5}\dashrightarrow R_{5,5}$ corresponds to the cubics $f_3:=x_1x_2(x_0-x_1) =0$ and
$g_3:=x_0(x_0-x_1-x_2)(x_0-x_2) =0$. This exhibits $S_{5,5}$ as a double cover of
$\mathbb{P}^2$ branched along the union of these two cubics. So we
obtain a different equation for $S_{5,5}$, as a double cover of
$\mathbb{P}^2$, i.e.
\begin{equation}\label{eq: S 2:1 cover of P2}w^2=x_1x_2(x_0-x_1)x_0(x_0-x_1-x_2)(x_0-x_2).\end{equation}
We observe that $S_{5,5}$ is rigid (both in the moduli space of the elliptic K3 surfaces with prescribed reducible fibers and in the moduli space of the K3 surfaces with a non--symplectic involution with a prescribed fixed locus), since both $R_{5,5}$ and the
choice of the branch fibers are.

\subsection{The branch fibers are $2I_0$: the K3 surface
$X_{5,5}$}\label{subsec: K3 double cover branch 2I0}

Let us consider the K3 surface $X_{5,5}$ obtained by a base change
of order 2 of $R_{5,5}$ whose branch locus corresponds to two
fibers of type $I_0$. Let us assume it is very general among the
K3 obtained in this way. This K3 surface lies in a 2-dimensional
family of K3 surfaces (see \cite{GSarti}), whose parameters depend
on the choice of the two branch fibers (see \eqref{eq: Weierstrass
X55, induced by R}).

\subsubsection{Geometric description of $X_{5,5}$ and its N\'eron--Severi group}

The surface $X_{5,5}$ admits a non-symplectic involution $\iota$ which is the cover involution of the double cover $X_{5,5}\rightarrow R_{5,5}$ and which fixes two elliptic curves.

The elliptic fibration $\mathcal{E}_{X_{5,5}}: X_{5,5}\rightarrow \PP^1$ induced by $\mathcal{E}_{R_{5,5}}:R_{5,5} \rightarrow \PP^1$ has four fibers of type $I_5$ and four fibers of type $I_1$. Moreover it has a 5-torsion section, induced by the one of sections of the elliptic fibration on $\mathcal{E}_{R_{5,5}}$. The N\'eron--Severi group and the transcendental lattice of this K3 surface are computed in \cite{GSarti} and a set of generators of the N\'eron--Severi group is given by the class of the fiber of the fibration, the zero section, one section of order 5 and the irreducible components of the reducible fibers of the fibration.

The curves $\Theta_i^{(j)}$ are not in the branch locus and we denote by $\Omega_i^{(j,k)}$ for $k=1,2$ the two disjoint rational curves which are mapped to $\Theta_i^{(j)}$ by the quotient map $X_{5,5}\ra R_{5,5}$. The curves $P_i$ are not in the branch locus and we denote by $Q_i$ their $2:1$ cover in $X_{5,5}$.  The dual graph of this configuration is shown in Figure \ref{negative_curves_X55}. This diagram is also a tropical surface, similar to the one given in \cite[Figure 1]{RSS}, as pointed out by B.\ Sturmfels.


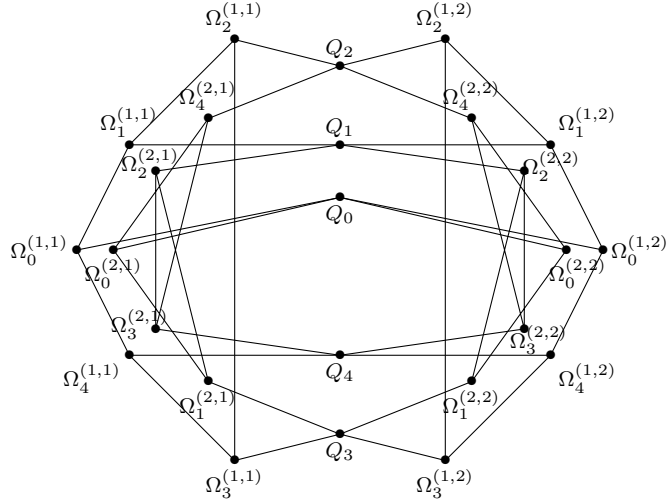
\begin{figure}[h]
\begin{tikzpicture}[scale=.7]
{\footnotesize

\draw (-2,4) node{$\bullet$};

\draw (-2,-4) node{$\bullet$};

\draw (-4,2) node{$\bullet$};

\draw (-4,-2) node{$\bullet$};

\draw (-5,0) node{$\bullet$};

\draw (2,4) node{$\bullet$};

\draw (2,-4) node{$\bullet$};

\draw (4,2) node{$\bullet$};

\draw (4,-2) node{$\bullet$};

\draw (5,0) node{$\bullet$};

\draw (0,3.5) node{$\bullet$};
\draw (0,3.5) node[above]{$Q_2$};

\draw (0,2) node{$\bullet$};
\draw (0,2) node[above]{$Q_1$};

\draw (0,1) node{$\bullet$};
\draw (0,1) node[below]{$Q_0$};

\draw (0,-2) node{$\bullet$};
\draw (0,-2) node[below]{$Q_4$};

\draw (0,-3.5) node{$\bullet$};
\draw (0,-3.5) node[below]{$Q_3$};

\draw (-2,4) -- (-2,-4)--(-4,-2)--(-5,0)--(-4,2)--(-2,4);

\draw (2,4) -- (2,-4)--(4,-2)--(5,0)--(4,2)--(2,4);

\draw (4.3,0) node{$\bullet$};

\draw (3.5,1.5) node{$\bullet$};

\draw (3.5,-1.5) node{$\bullet$};

\draw (2.5,2.5) node{$\bullet$};

\draw (2.5,-2.5) node{$\bullet$};

\draw (4.3,0) -- (2.5,-2.5) -- (3.5,1.5) -- (3.5,-1.5) -- (2.5,2.5) -- (4.3,0);

\draw (-4.3,0) node{$\bullet$};

\draw (-3.5,1.5) node{$\bullet$};

\draw (-3.5,-1.5) node{$\bullet$};

\draw (-2.5,2.5) node{$\bullet$};

\draw (-2.5,-2.5) node{$\bullet$};

\draw (-4.3,0) -- (-2.5,-2.5) -- (-3.5,1.5) -- (-3.5,-1.5) -- (-2.5,2.5) -- (-4.3,0);

\draw (-2,4)node[above]{$\Omega_2^{(1,1)}$} -- (0,3.5);
\draw (-2.5,2.5)node[above]{$\Omega_4^{(2,1)}$} -- (0,3.5);

\draw (2,4)node[above]{$\Omega_2^{(1,2)}$} -- (0,3.5);
\draw (2.5,2.5)node[above]{$\Omega_4^{(2,2)}$} -- (0,3.5);

\draw (-4,2)node[above]{$\Omega_1^{(1,1)}$} -- (0,2);
\draw (-3.5,1.5) -- (0,2);

\draw (-3.5-.1,1.5+.15) node{$\Omega_2^{(2,1)}$} ;

\draw (4,2)node[above right]{$\Omega_1^{(1,2)}$}  -- (0,2);
\draw (3.5,1.5) -- (0,2);

\draw (3.5+.52,1.5+.1) node{$\Omega_2^{(2,2)}$} ;

\draw (-5,0)node[left]{$\Omega_0^{(1,1)}$} -- (0,1);
\draw (-4.3,0)node[below]{$\Omega_0^{(2,1)}$} -- (0,1);

\draw (5,0)node[right]{$\Omega_0^{(1,2)}$}  -- (0,1);
\draw (4.3,0) -- (0,1);

\draw (4.3+.22,0)node[below]{$\Omega_0^{(2,2)}$};

\draw (-4,-2)node[below left]{$\Omega_4^{(1,1)}$} -- (0,-2);
\draw (-3.5,-1.5) -- (0,-2);

\draw (-3.5-.3,-1.5+.1) node{$\Omega_3^{(2,1)}$} ;

\draw (4,-2)node[below right]{$\Omega_4^{(1,2)}$} -- (0,-2);
\draw (3.5,-1.5) -- (0,-2);

\draw (3.5+.28,-1.5 -.2)node{$\Omega_3^{(2,2)}$};

\draw (-2,-4)node[below]{$\Omega_3^{(1,1)}$} -- (0,-3.5);
\draw (-2.5,-2.5)node[below]{$\Omega_1^{(2,1)}$} -- (0,-3.5);

\draw (2,-4)node[below]{$\Omega_3^{(1,2)}$} -- (0,-3.5);
\draw (2.5,-2.5)node[below]{$\Omega_1^{(2,2)}$} -- (0,-3.5);
}
\end{tikzpicture}
\caption{Dual graph of relevant negative curves on $X_{5,5}$.}\label{negative_curves_X55}
\end{figure}

\subsubsection{Weierstrass equation of $X_{5,5}$}\label{sub: Weierstrass X55}
Let us denote by $\mu_1$ and $\mu_2$ two arbitrary points of
$\mathbb{P}^1_{\mu}$ such that the fibers of \eqref{eq:
Weierstrass R5511} over $\mu_1$ and $\mu_2$ are smooth. Let
$X_{5,5}$ be the surface obtained from $R_{5,5}$ by a base change
of order 2 branched in $\mu_1$ and $\mu_2$. We already observed
that the surface $X_{5,5}$ lives in a 2-dimensional family of K3
surfaces and its equation depends on the two parameters $\mu_1$
and $\mu_2$.

Let us consider the base change $\mathbb{P}^1_{(\alpha:\beta)}\ra\mathbb{P}^1_{(\mu:\lambda)}$ branched over $(\mu:\lambda)=(\mu_1:1)$ and $(\mu_2:1)$, i.e. the base change given by
\begin{equation}\label{eq: base change}\mu=\mu_1 \alpha^2+\beta^2,\ \ \ \ \ \lambda=\alpha^2+\beta^2/\mu_2.\end{equation} It induces on $X_{5,5}$ the elliptic fibration
\begin{equation}\label{eq: Weierstrass X55, induced by R}
y^2=x^3+A(\alpha:\beta)x+B(\alpha:\beta)
\end{equation}
whose discriminant is
\begin{align}\label{eq:disc generic base change}\begin{array}{l}
\left(\left(\alpha^2\mu_2+\beta^2\right)^5\left(\mu_1\alpha^2+\beta^2\right)^5\left(\alpha^4\mu_1^2\mu_2^2+11\alpha^4\mu_1\mu_2^2-\alpha^4\mu_2^2+11\alpha^2\mu_1\mu_2\beta^2+\right.\right.\\
\left.\left.+2\mu_1\alpha^2\mu_2^2\beta^2+11\alpha^2\mu_2^2\beta^2-2\alpha^2\mu_2\beta^2+\mu_2^2\beta^4+11\beta^4\mu_2-\beta^4\right)\right)/\left(16\mu_2^7\right)\end{array}\end{align}

For generic values of $\mu_1$ and $\mu_2$ the elliptic fibration has $4I_5+4I_1$ as singular fibers.
\subsubsection{Double cover of $\mathbb{P}^2$}\label{sub: double cover P2 X55}
On the other hand, $X_{5,5}$ is the double cover of
$\mathbb{P}^2$ branched on the union of the two cubics
$x_1x_2(x_0-x_1)+\mu_1x_0(x_0-x_1-x_2)(x_0-x_2) =0$ and
$x_1x_2(x_0-x_1)+\mu_2x_0(x_0-x_1-x_2)(x_0-x_2) =0$. So $X_{5,5}$ can
be described by the equation
\begin{multline}\label{eq: X55 2:1 cover of P2}w^2=\bigl(x_1x_2(x_0-x_1)+\mu_1x_0(x_0-x_1-x_2)(x_0-x_2)\bigr)\\ \bigl(x_1x_2(x_0-x_1)+\mu_2x_0(x_0-x_1-x_2)(x_0-x_2)\bigr).\end{multline}

\subsection{Branch fibers $I_5$ and $I_1$}\label{subsec: double cover I5 I1}
If one uses as branch fibers a fiber of type $I_5$  and one of
type $I_1$ one obtains a rigid K3 surface (in the moduli space of the elliptic K3 surfaces with prescribed reducible fibers), whose singular fibers
are $I_{10}+2I_5+I_2+2I_1$ and theoretically one has four
different admissible choices to do that. Indeed one can choose the fiber of type $I_5$ which is the branch fiber to be the
fiber either over $\mu_1=0$ or over $\mu_1=\infty$ and similarly
one can choose the fiber of type $I_1$ which is the branch
fiber to be the fiber either over $\mu_2=(-11+5\sqrt{5})/2$ or over
$\mu_2=(-11-5\sqrt{5})/2$. In order to obtain the Weierstrass
equation of these elliptic fibrations it suffices to apply the
base change \eqref{eq: base change} with the chosen values for
$\mu_1$ and $\mu_2$.

We assume that $\mu_2=(-11+5\sqrt{5})/2$ and we obtain the following two Weierstrass equations. If $\mu_1=0$ the base change \eqref{eq: base change} applied to $\mu_1=0$ and $\mu_2=(-11+5\sqrt{5})/2$ gives an elliptic fibration:

\begin{equation*}
y^2=x^3+A(\alpha:\beta)x+B(\alpha:\beta)
\end{equation*}
whose discriminant is
\begin{equation*}
-\frac{1}{512}\left(-\alpha^2+5\beta^2\sqrt{5}\right)\left(-2\alpha^2-11\beta^2+5\beta^2\sqrt{5}\right)^5\alpha^2\beta^{10}.
\end{equation*}

In order to choose $\mu_1=\infty$, one has to slightly change the equation of the base change \eqref{eq: base change}, which now is $\mu=\alpha^2$ and $\lambda=\alpha^2/\mu_2+\beta^2$ and one obtains

\begin{equation*}y^2=x^3+A(\alpha:\beta)x+B(\alpha:\beta)
\end{equation*}
whose discriminant is
\begin{equation*}
\frac{1}{5120}\left(-375125+167761\sqrt{5}\right)\left(-25\alpha^2+\beta^2\sqrt{5}\right)\left(-2\alpha^2+11\beta^2+5\beta^2\sqrt{5}\right)^5\beta^2\alpha^{10}.\end{equation*}
We observe that in the first case the K3 surface obtained is described as a double cover of $\mathbb{P}^2$ by the equation
$$w^2=x_1 x_2 (x_0-x_1)\left(x_0x_1x_2(13+5\sqrt{5})-x_1^2 x_2(11+5\sqrt{5})+2x_0^3-4x_0^2x_2-2x_1x_0^2+2x_0x_2^2\right),$$
in the second by the equation
$$w^2= x_0(x_0-x_1-x_2)(x_0-x_2)\left(x_0x_1x_2(13+5\sqrt{5})-x_1^2x_2(11+5\sqrt{5})+2x_0^3-4x_0^2x_2-2x_1x_0^2+2x_0x_2^2\right).$$

\subsection{Branch fibers $I_5$ and $I_0$}
If one chooses as branch fibers one fiber of type $I_5$ and one of type $I_0$ one obtains a 1-dimensional family of K3 surfaces, whose singular fibers are $I_{10}+2I_5+4I_1$ and theoretically one has two different admissible ways to do that.  Indeed one can chose that the fiber of type $I_5$ which is the branch fiber is the fiber either over $\mu_1=0$ or over $\mu_1=\infty$, while $\mu_2$ is the parameter of the family. In order to obtain the equations of these elliptic fibrations one has to apply the base changes $(\mu=\beta^2,\lambda=\alpha^2+\beta^2/\mu_2)$ or $(\mu=\alpha^2,\lambda=\alpha^2/\mu_2+\beta^2)$ to the equation \eqref{eq: Weierstrass R5511}, exactly as in the previous sections.

Similarly one can describe these K3 surfaces as a double cover of
$\mathbb{P}^2$ substituting in \eqref{eq: X55 2:1 cover of
P2} the appropriate values of $\mu_1$ and $\mu_2$.
\subsection{Branch fibers $I_1$ and $I_0$}
If one chooses as branch fibers one fiber of type $I_1$  and one of type $I_0$ one obtains a 1-dimensional family of K3 surfaces, whose singular fibers are $4I_5+I_2+2I_1$  and theoretically one has two different admissible ways to do that.  Indeed one can choose that the fiber of type $I_1$ which is the branch fiber is the fiber either over $\mu_1=(-11-5\sqrt{5})/2$ or over $\mu_1=(-11+5\sqrt{5})/2$, while $\mu_2$ is the parameter of the family. In order to obtain the equations of these elliptic fibrations one has to apply the base change \eqref{eq: base change} with the chosen $\mu_1$ to the equation \eqref{eq: Weierstrass R5511} and to obtain an equation of this surface as a double cover of $\mathbb{P}^2$
one has to substitute the chosen $\mu_1$ in \eqref{eq: X55 2:1 cover of P2}.
\subsection{Branch fibers $2I_1$}
If one chooses as branch fibers the two fibers of type $I_1$ one
obtains a rigid K3 surface, whose reducible fibers are $4I_5+2I_2$.  In order
to obtain the equation of this elliptic fibration one has to apply
the base change \eqref{eq: base change} with the chosen
$\mu_1=(-11-5\sqrt{5})/2$ and $\mu_2=(-11+5\sqrt{5})/2$ to the
equation \eqref{eq: Weierstrass R5511}. Similarly to obtain an
equation of this surface as a double cover of $\mathbb{P}^2$ one
has to substitute  $\mu_1=(-11-5\sqrt{5})/2$ and
$\mu_2=(-11+5\sqrt{5})/2$ in \eqref{eq: X55 2:1 cover of P2}.

\section{Elliptic fibrations on K3 surfaces induced by the conic
bundles}\label{sec: elliptic fibrations induced by conic bundles}
The aim of this section is to describe both geometrically and by
the Weierstrass equations the elliptic fibrations induced by the
conic bundles $B_i$ (described in Section \ref{sec: conic
bundles}) on the K3 surfaces described in Section \ref{sec: K3
covers of  R55}. We also provided a general method to find these
Weierstrass equations, under some assumption on the conic bundles,
see Section \ref{subsec: an algorithm (generalized)  conic
bundle}.

\subsection{An example}
Let us consider the K3 surface $S_{5,5}$, whose equation as a double
cover of $\mathbb{P}^2$ is given by \eqref{eq: S 2:1 cover of P2}. Let us consider also the conic bundle $|B_1|$ from Section \ref{sec:
conic bundles}. By \cite[Theorem 5.3]{GS}, the conic bundle $\vert B_1\vert$ induces an
elliptic fibration on $S_{5,5}$ with three reducible fibers of
type $I_2^*$: one whose components are $Q_0$,
$\Omega_{4,0}^{(1)}$, $\Omega_0^{(1)}$, $\Omega_{1,0}^{(1)}$,
$\Omega_1^{(1)}$, $Q_1$, and $\Omega_{2,1}^{(1)}$, one whose
components are $Q_2$, $\Omega_{4,0}^{(2)}$, $\Omega_4^{(2)}$,
$\Omega_{4,3}^{(2)}$, $\Omega_3^{(2)}$, $Q_4$, and
$\Omega_{3,2}^{(2)}$, and one whose components are
$\Omega_{4,3}^{(1)}$, $\Omega_{3,2}^{(1)}$, $\Omega_3^{(1)}$,
$Q_3$, $\Omega_{1}^{(2)}$, $\Omega_{2,1}^{(2)}$, and
$\Omega_{1,0}^{(2)}$.

\subsubsection{Equation of the elliptic fibration on $S_{5,5}$ induced by $|B_1|$}
Let us consider the conic bundle $B_1$ on $R_{5,5}$ associated to
the pencil of lines $x_1=\tau x_0\subset \mathbb{P}^2$. It induces
an elliptic fibration on $S_{5,5}$. To find the equation of this
elliptic fibration we use the equation of $S_{5,5}$ as double cover of
$\mathbb{P}^2$, i.e. the equation \eqref{eq: S 2:1 cover of  P2} and we
substitute in $x_1=\tau x_0$ in \eqref{eq: S 2:1 cover of  P2}.

This gives:
$$w^2= (\tau x_0)x_2(x_0-\tau x_0)x_0(x_0-\tau x_0-x_2)(x_0-x_2).$$
We put $x_2=1$ and we obtain
$$w^2= \tau (1-\tau)x_0^3(x_0-\tau x_0-1)(x_0-1).$$

Let us consider the change of coordinates $w\mapsto wx_0$ and divide both the members by $x_0^2$. We
obtain
$$w^2= \tau (1-\tau)x_0(x_0-\tau x_0-1)(x_0-1).$$

This is the equation of an elliptic fibration over $\mathbb{P}^1_\tau$. 
Moreover one can explicitly compute the Weierstrass
form: first one uses the change of coordinates $w\mapsto
\tau^2(1-\tau)w$ and $x_0\mapsto \tau x_0$ obtaining
$$
w^2\tau^4(1-\tau)^2=\tau^2(1-\tau)x_0(\tau x_0(1-\tau)-1)(\tau
x_0-x_2)
$$   and so
$$
w^2=x_0\left(x_0-\frac{1}{\tau(1-\tau)}\right)\left(x_0-\frac{1}{\tau}\right)
$$
Second, one considers the change of coordinates $w\mapsto
w/\tau^3(1-\tau)^3$ and $x_0\mapsto x_0/\tau^2(1-\tau)^2$ and
multiplies all the equation by $\tau^6(1-\tau)^6$. So one obtains
\begin{eqnarray}\label{eq: w B1 on S}w^2=x_0(x_0-\tau(1-\tau))(x_0-\tau(1-\tau)^2).\end{eqnarray}
The discriminant is $\tau^8(1-\tau)^8$ and so, by Tate's algorithm, there are three
fibers of type $I_2^*$ over $\tau=0$, $\tau=1$, $\tau=\infty$.

\subsection{An algorithm to compute Weierstrass equations}\label{subsec: an algorithm (generalized) conic bundle}

The aim of this section is to formalize systematically what we did
above.

\noindent\textbf{Setup.} Let $V$ be a K3 surface obtained by a
base change of order 2 from a rational elliptic surface $R$.
Therefore, $V$ can be described as double cover of $\mathbb{P}^2$
branched on the union of two (possibly reducible) plane cubics
from the pencil determining $R$. It has an equation of the form
\begin{equation}\label{eq: V double cover P2}w^2=f_3(x_0:x_1:x_2)g_3(x_0:x_1:x_2).\end{equation}
Let $B$ be a conic bundle on $R$, e.g. a basepoint-free linear
system of rational curves giving $R \rightarrow\mathbb{P}^1_\tau$.
Pushing forward to $\mathbb{P}^2$, $B$ is given by a pencil of
plane rational curves with equation $h(x_0:x_1:x_2,\tau)$.  The
polynomial $h(x_0:x_1:x_2,\tau)$ is homogeneous in $x_0$, $x_1$,
$x_2$, say of degree $e\geq 1$ and linear in $\tau$.

As the anticanonical series on $R$ coincides with the elliptic
fibration, the adjunction formula implies that every curve with
equation $h(x_0:x_1:x_2,\tau)$ meets both of the branch curves
(the proper transforms on $R$ of) $f_3=0$ and $g_3=0$ in two
additional points.  It therefore meets (the proper transform of)
their union $f_3g_3=0$ in four points.  (Note that there may be
additional points of intersection on $\mathbb{P}^2$ which are
separated in the blow-up $R$).  Therefore the preimage in $V$ is
the double cover of a rational curve branched over $4$ points,
e.g. the standard presentation of an elliptic curve.  For general
$\tau$, we must find an isomorphism of the curve
$h(x_0:x_1:x_2,\tau) =0$ with $\mathbb{P}^1$, and extract the
images of the four intersection points with $f_3g_3 =0$.

When $e \leq 3$, an isomorphism with $\mathbb{P}^1$ is provided by
projection from a point of order $e-1$ on the curve (e.g. any
point in $\mathbb{P}^2$ if $e=1$, a point on the conic if $e=2$,
and a double point of the cubic if $e=3$).  Such a point
necessarily exists (in the case $e=3$ the singularity must be a
basepoint of the pencil) and is also necessarily a basepoint of
the original pencil of cubics giving $\E_R$.  Up to acting by
$\PGL_3(\C)$, we may assume that this point is $(0:1:0)$.


\noindent\textbf{Algorithm when $e\leq 3$.}
\begin{enumerate}
\item Compute the resultant of the polynomials
$f_3(x_0:x_1:x_2)g_3(x_0:x_1:x_2)$ and $h(x_0:x_1:x_2,\tau)$ with
respect to the variable $x_1$. The result is a polynomial
$r(x_0:x_2,\tau)$ which is homogeneous in $x_0$ and $x_2$,
corresponding to the images of \textit{all} of the intersection
points $\{f_3g_3=0 \} \cap \{h_\tau=0\}$ after projection from
$(0:1:0)$.

\item Since $B$ is a conic bundle, $r(x_0:x_2,\tau)$ will be of
the form $a(x_0:x_2,\tau)^2b(x_0:x_2,\tau)c(\tau)$, where $a$ and
$b$ are homogeneous in $x_0$ and $x_2$, the degree of $a$ depends
upon $e$ and the degree of $b$ in $x_0$ and $x_2$ is 4. \item The
equation of $V$ is now given by $w^2=r(x_0:x_2,\tau)$, which is
birationally equivalent to
\begin{equation}\label{eq: genus 1 fibration by conic bundles}w^2=c(\tau)b(x_0:x_2,\tau),\end{equation} by the change of coordinates
$w\mapsto wa(x_0:x_2,\tau)$. Since for almost every $\tau$, the
equation \eqref{eq: genus 1 fibration by conic bundles} is the
equation of a $2:1$ cover of $\mathbb{P}^1_{(x_0:x_2)}$ branched
in 4 points, \eqref{eq: genus 1 fibration by conic bundles} is the
equation of the genus 1 fibration on the K3 surface $V$ induced by
the conic bundle $B$. \item If there is a section of fibration
\eqref{eq: genus 1 fibration by conic bundles}, then it is
possible to obtain the Weierstrass form by standard
transformations.\end{enumerate}

\begin{rem}{\rm The algorithm can be applied exactly in the same way to the generalized conic bundles, and not only to the conic bundles.}\end{rem}

When $e\geq 4$, projection from a point may suffice, for example
if all curves have a basepoint of degree $e-1$. However there are
several conic bundles whose general member can not be parametrized
by lines.

We now consider a parametrization which can be done by conics.
Let $\mathcal{P}$ be a pencil of rational curves of degree $e$
passing through the (possibly infinitely near) basepoints
$T_1,\ldots T_{r}$ with certain multiplicities. Let $\mathcal{C}$
be a pencil of conics whose basepoints are among $T_1,\ldots
T_{r}$ and such that $2e-1$ intersection points between a generic
curve in $\mathcal{P}$ and a generic curve in $\mathcal{C}$ are in
$\{T_1,\ldots T_r\}$. So there is exactly one extra intersection
between a generic curve in $\mathcal{P}$ and a generic curve in
$\mathcal{C}$. This allows to parametrize the curves in
$\mathcal{P}$ by the curves in $\mathcal{C}$. If moreover the base
points of $\mathcal{C}$ are three distinct points and one
infinitely near point we will say that the condition $(\star)$ is
satisfied. So $(\star)$ is satisfied if the curves in
$\mathcal{P}$ can be parametrized by a pencil of conics passing
through 4 points, exactly two of which are infinitely near.

If the condition $(\star)$ is satisfied, up to changing
coordinates by some matrix $M \in \PGL_3(\cc)$, we may assume that
the basepoints of $\mathcal{C}$ are  $p_1 = (0:0:1), p_2 =
(0:1:0), p_3 = (1:0:0)$ and the infinitely near point $p_1'$
corresponds to the line $x_0=x_1$.  The pencil of degree $2$ maps
$\pp^1_z \to \pp^2$ sending
\[1 \mapsto p_1, \qquad \infty \mapsto p_2, \qquad 0 \mapsto p_3 \]
with derivative at $z=1$ in the direction of the line $x_0=x_1$ is
given by
\[(x_0:x_1:x_2) = (z-1:z(z-1):p\cdot z), \qquad p \in \pp^1. \]

In the following we are interested in pencils of quartics which
satisfy the condition $(\star)$, so we study the effect of this condition on quartic curves. We say that a pencil of quartics satisfies
$(\dagger)$ if, up to a change of coordinates, it is of one of the
following types: (1) each quartic is double at $p_2$ and has a
tacnode at $p_1$ with principal tangent specified by $p_1'$; (2)
each quartic is double at $p_2$ and $p_3$ and has a cusp at $p_1$
with principal tangent specified by $p_1'$.

We recall that, by the construction of the conic bundles, all the
basepoints of $\mathcal{P}$ (and thus also of $\mathcal{C}$) are
also singular points for the sextic $f_3\cdot g_3=0$.

\noindent\textbf{Algorithm assuming $(\dagger)$.}
\begin{enumerate}
\item Factor $h(z-1:z(z-1):pz,\tau) = c(\tau)z^a(z-1)^br(z, p, \tau)$ where $r(z, p, \tau)$ is linear in $z$ and $a+b$ is $5$ or $6$ depending on the multiplicity of $h$ at $p_2$.  The solution $z_0$ of $r(z, p, \tau)=0$ gives the rational parameterization $h(z_0-1:z_0(z_0-1):pz_0,\tau)=0$ with parameter $p \in \pp^1$.

\item  A birational equation of the K3 surface is given by
\[w^2 = (f_3 \cdot g_3)(z_0-1:z_0(z_0-1):pz_0). \]

\item If there is a section of fibration \eqref{eq: genus 1
fibration by conic bundles}, then it is possible to obtain the
Weierstrass form by standard transformations.\end{enumerate} This
algorithm can be generalized to pencil of curves satisfying
$(\star)$.

\subsection{The elliptic fibrations induced by conic bundles}\label{subsec: elliptic fibrations induced by conic bundle}
Here we can describe and compute the equations of all the elliptic
fibrations induced by the conic bundles on the different K3
surfaces introduced. For this purpose, we apply the algorithm, described in the previous section, to the equations of the conic bundles given in Section \ref{sec:
conic bundles} and to the Weierstrass equations given in Section \ref{sec:
K3 covers of R55}.

\subsubsection{The K3 surface $S_{5,5}$}\label{subsec: fibration induced by conic bundles S}
The conic bundle $\vert B_2\vert$ induces an elliptic fibration on
$S_{5,5}$ with two reducible fibers of type $I_4^*$: one whose
components are $Q_0$, $\Omega_{0,4}^{(1)}$, $\Omega_0^{(1)}$,
$\Omega_{1,0}^{(1)}$, $\Omega_1^{(1)}$, $\Omega_{2,1}^{(1)}$,
$\Omega_2^{(1)}$, $Q_2$, and $\Omega_{3,2}^{(1)}$, and one whose
components are $Q_3$, $\Omega_{1,0}^{(2)}$, $\Omega_1^{(2)}$,
$\Omega_{2,1}^{(2)}$, $\Omega_2^{(2)}$, $\Omega_{3,2}^{(2)}$,
$\Omega_3^{(2)}$, $Q_4$, and $\Omega_{4,3}^{(2)}$.

The Weierstrass equation is computed applying the algorithm to $f_3g_3=x_0x_1x_2(x_0-x_1)(x_0-x_2)(x_0-x_1-x_2)$, by \eqref{eq: S 2:1 cover of  P2}, and $h(x_0:x_1:x_2,\tau)=x_1x_2-\tau(x_0x_2-x_0^2)$. One obtains the Weierstrass equation \begin{equation}\label{eq: w B2 on S} y^2=x^3-\frac{\tau^2(-\tau^2+\tau^4+1)}{3}x-(1/27)\frac{\tau^3(-2+\tau^2)(-1+2\tau^2)(\tau^2+1)}{27}.\end{equation}  So the discriminant $\Delta(\tau)$ is $-\tau^{10}(-1+\tau)^2(\tau+1)^2$. Hence, by Tate's algorithm, this fibration has two fibers of type $I_4^*$ over $\tau=0,\infty$ and two fibers of type $I_2$ over $\tau=\pm 1$.

The conic bundle $\vert B_3\vert$ induces an elliptic fibration on
$S_{5,5}$ with two reducible fibers: one of type $I_6^*$ whose
components are $Q_0$, $\Omega_{0,4}^{(1)}$, $\Omega_0^{(1)}$,
$\Omega_{1,0}^{(1)}$, $\Omega_1^{(1)}$, $\Omega_{2,1}^{(1)}$,
$\Omega_2^{(1)}$, $\Omega_{3,2}^{(1)}$, $\Omega_3^{(1)}$, $Q_3$,
and $\Omega_{4,3}^{(1)}$, one of type $III^*$ whose components are
$Q_4$, $\Omega_{3}^{(2)}$, $\Omega_{3,2}^{(2)}$,
$\Omega_{4,3}^{(2)}$, $\Omega_2^{(2)}$, $\Omega_{4}^{(2)}$,
$\Omega_{2,1}^{(2)}$, and $\Omega_{0,4}^{(2)}$.

The Weierstrass equation is \begin{equation}\label{eq: w B3 on S}y^2=x^3-\frac{\tau^3(\tau^3+6\tau^2+9\tau+3)}{3}x-\frac{\tau^5(\tau+3)(2\tau^3+12\tau^2+18\tau+9)}{27}.\end{equation}  So the discriminant $\Delta(\tau)$ is $-\tau^9(\tau+4)(\tau+1)^2$. Hence, by Tate's algorithm, this fibration has one fiber of type $I_6^*$ over $\tau=\infty$, one fiber of type $III^*$ over $\tau=0$, one fiber of type $I_2$ over $\tau=-1$ and one fiber of type $I_1$ over $\tau=-4$.


\subsubsection{The K3 surface $X_{5,5}$}\label{subsec: ef induced by cb on X}
Here we consider the elliptic fibrations induced by $B_i$ on $X_{5,5}$. We recall that in this case one has to apply the algorithm to (\ref{eq: X55 2:1 cover of P2}).
We summarize the results in the following table, where 
$r$ denotes the rank of the Mordell-Weil group of the fibration.

\begin{equation}
\label{eq: table fibrations from conic bundle generic}
\begin{array}{|c|c|c|c|}
\hline
&\Delta&singular fibers&r\\
\hline
B_1&\small{\begin{array}{c}-\tau^6(\tau-1)^6(\mu_1-\mu_2)^4\\
(\tau^3-2\tau^2-2\tau^2\mu_2+6\tau\mu_2+\tau+\tau\mu_2^2-4\mu_2)\\
(\tau^3-2\tau^2-2\tau^2\mu_1+6\tau\mu_1+\tau+\tau\mu_1^2-4\mu_1)\end{array}}&
\begin{array}{rr}1I_0^*&\tau=0\\ 2I_6&\tau=\infty,1\\6I_1\end{array}&2\\
\hline
B_2&\small{\begin{array}{c}
-\tau^8(-\tau^3+2\tau^2+\tau^2\mu_2+2\tau\mu_2-\tau+\mu_2)(-\tau+\mu_2)\\
(-\tau^3+2\tau^2+\tau^2\mu_1+2\tau\mu_1-\tau+\mu_1)(-\tau+\mu_1)(\mu_1-\mu_2)^4\end{array}}&
\begin{array}{rr}2I_8&\tau=0,\infty\\ 8I_1\end{array}&2\\
\hline
B_3&\small{\begin{array}{c}
-\tau^8(\tau+2\tau^2+\tau^3-6\mu_2\tau-2\mu_2\tau^2+4\mu_2^2+\tau\mu_2^2)\\
(\tau^3-6\mu_1\tau+2\tau^2+\tau+\mu_1^2\tau+4\mu_1^2-2\mu_1\tau^2)(\mu_1-\mu_2)^4
\end{array}}&
\begin{array}{rr}I_{10}&\tau=\infty\\ I_2^*&\tau=0\\6I_1\end{array}&1\\
\hline
\end{array}
\end{equation}

\subsubsection{Other K3 surfaces}
As we saw above, all the equations for K3 surfaces that are
double covers of $R_{5,5}$ branched over some special fibers can be
obtained from the
general equation for $X_{5,5}$ by substituting particular values of $\mu_1$ and $\mu_2$.
In particular in order to find the Weierstrass equations of the elliptic fibrations induced by the conic bundles $B_i$ on a specific K3 surface it suffices to substitute in \eqref{eq: table fibrations from conic bundle  generic} the appropriate values of $\mu_1$ and $\mu_2$.
As an example here we construct a table analogous to \eqref{eq: table fibrations from conic bundle  generic} if the K3 surface is obtained by $R_{5,5}$ branching along one fiber of type $I_5$ and one smooth fiber. We already noticed that one has two different choices for the $I_5$ branch fiber. Once one chooses the branch fiber $I_5$, the construction is not symmetric in $I_5$. 
For $i=1,2$ the reducible fibers of the conic bundle $|B_i|$ are symmetric up to switching the $I_5$-fibers, so the conic bundle $|B_i|$ is associated to elliptic fibrations with the same property choosing differently the $I_5$ branch fiber. For the conic bundles $|B_1|$ and $|B_2|$ we will choose the $I_5$ branch fiber to be the one over $\mu_1=0$.
The conic bundle $|B_3|$ has a unique reducible fiber, supported over one specific $I_5$, so the elliptic fibrations induced by this conic bundle have not necessarily the same properties if one change the $I_5$ branch fibers. So we give the equations of the elliptic fibrations if one chose both $\mu_1=0$ and $\mu_1=\infty$.

For all the conic bundles $\mu_2$ is the parameter of the 1-dimensional family of K3 surfaces we are considering. So we obtain the following (where $r=\rk(MW)$ and both the reducible fibers and $r$ are given for generic choice of $\mu_2$):

\begin{equation}\label{eq: table fibrations from conic bundle I_5I_0}\begin{array}{|c|c|c|c|}
\hline
&\Delta&singular fibers&r\\
\hline
B_1&\small{\begin{array}{c}-\tau^7\mu_2^4(-1+\tau)^8(\tau^3-2\tau^2-2\tau^2\mu_2+\tau+6\tau\mu_2+\tau\mu_2^2-4\mu_2)\end{array}}&
\begin{array}{rrrr}I_1^*&\tau=0,& I_2^*&\tau=1\\I_6&\tau=\infty,&3I_1\end{array}&1\\
\hline
B_2&\small{\begin{array}{c}
-\tau^{10}\mu_2^4(-1+\tau)^2(\mu_2+\tau^2\mu_2+2\tau\mu_2-\tau^3-\tau+2\tau^2)(\mu_2-\tau)\end{array}}&
\begin{array}{rrrr}I_4^*&\tau=0,&I_2& \tau=1\\I_8,&\tau=\infty,& 4I_1\end{array}&1\\
\hline
\begin{array}{c}
B_3,\\ \mu_1=0\end{array}&\small{\begin{array}{c}
-\mu_2^4\tau^9(\tau+1)^2(\tau+2\tau^2+\tau^3-6\mu_2\tau-2\mu_2\tau^2+4\mu_2^2+\tau\mu_2^2)
\end{array}}&
\begin{array}{rrrr}III^*&\tau=\infty,& I_2&\tau=-1\\I_{10}&\tau=0,&3I_1\end{array}&1\\
\hline
\begin{array}{c}
B_3,\\ \mu_1=\infty\end{array}&
-\tau^8(4+\tau)(\tau+2\tau^2+\tau^3-6\mu_2\tau-2\mu_2\tau^2+4\mu_2^2+\tau\mu_2^2)
&
\begin{array}{rrrr}I_{6}^*&\tau=0\\ I_2^*&\tau=\infty,&4I_1\end{array}&1\\
\hline
\end{array}
\end{equation}

\section{The K3 surface $S_{5,5}$ and its elliptic fibrations}\label{sec: the K3 surface S55}
The aim of this section is to prove the following results,
computing the equations of all the elliptic fibrations on
$S_{5,5}$.

\begin{proposition}\label{prop: classification elliptic fibrations S55}
The K3 surface $S_{5,5}$ admits 13 different elliptic fibrations.
One of them is induced by $\E_R$, 3 are induced by conic bundles,
3 by splitting genus one pencils and 6 by generalized conic
bundles. The equations of these elliptic fibrations are given in
\ref{eq: Weierstrass S, induced by R} (the one induced by $\E_R$),
in \ref{eq: w B1 on S}, \ref{eq: w B2 on S}, \ref{eq: w B3 on S}
(the ones induced by the conic bunldes), in \ref{eq: splitting
genus 1 pencils} (the ones induced by splitting genus 1 pencil)
and in \ref{eq: Weierstrass generalized conic bunde S}, \ref{eq:
N7} (the ones induced by generalized conic
bundles).\end{proposition}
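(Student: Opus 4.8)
The plan is to combine Nishiyama's lattice-theoretic count with the geometric classification induced by the non-symplectic involution $\iota$, and then to realize each abstract fibration explicitly by a pencil on $R_{5,5}$ or on its blow-up $\widetilde{R}$. First I would invoke \cite{Nish}: the classification of elliptic fibrations on the singular K3 surface $S_{5,5}$ (whose transcendental lattice is $\langle 2\rangle^2$) by embeddings of root lattices into Niemeier lattices yields exactly $13$ fibrations up to automorphism, together with their frame lattices, and hence their singular-fiber configurations and Mordell--Weil data. This fixes the number $13$ and provides the list of invariants that the explicit constructions must match.

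Second, I would use the fact, recorded in the geometric description of $S_{5,5}$, that $\iota$ acts trivially on $\NS(S_{5,5})$. Since a fibration is of type 3 precisely when its fiber class is not $\iota$-invariant, this immediately rules out type-3 fibrations; every one of the $13$ fibrations is therefore either fiber-preserving (the first category) or fibration-preserving but not fiber-preserving (the second category). By the correspondence of \cite{GS}, fibrations in the first category are induced by rational fibrations on $\widetilde{R}$ — conic bundles when the pencil already lives on $R_{5,5}$, and generalized conic bundles otherwise — while those in the second category are induced by genus-$1$ pencils on $R_{5,5}$ that split in the double cover, the induced fibration $\E_S$ being the distinguished such pencil coming from $\E_R$.

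Third, I would enumerate the relevant pencils. For the first category, the four conic bundles $B_1,B_2,B_3,B_4$ of the Proposition in Section \ref{sec: conic bundles} descend to $S_{5,5}$; because the construction of $S_{5,5}$ is symmetric in the two branch fibers of type $I_5$, the bundles $B_3$ and $B_4$ induce isomorphic fibrations, leaving three distinct conic-bundle fibrations, with equations \ref{eq: w B1 on S}, \ref{eq: w B2 on S}, \ref{eq: w B3 on S}. The remaining six fibrations of the first category come from generalized conic bundles, i.e. rational pencils on $\widetilde{R}$ that are not rational on $R_{5,5}$; equivalently, they are the $\iota$-quotients of the fiber-preserving fibrations on $S_{5,5}$, and I would locate their reducible fibers among the negative curves in Figure \ref{negative_curves_S5511} and apply the recipe of \cite{GS}, using the automorphisms $\sigma_5$ and $\sigma_2$ to organize the search and reduce the casework. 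For the second category, besides $\E_S$ I would exhibit three splitting genus-$1$ pencils on $R_{5,5}$ whose general member splits after the order-$2$ base change, giving the equations \ref{eq: splitting genus 1 pencils}.

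Finally, I would close the argument by matching. Each of the $1+3+3+6=13$ constructed fibrations carries a singular-fiber configuration, read off from its discriminant, together with a Mordell--Weil lattice; I would check that these invariants are pairwise distinct and coincide with the $13$ entries of Nishiyama's list, which yields a bijection and hence both completeness and the stated distribution among the four categories. The Weierstrass equations are then produced by the algorithm of Section \ref{subsec: an algorithm (generalized) conic bundle} applied to the double-cover model \eqref{eq: S 2:1 cover of P2}, giving \ref{eq: Weierstrass gneralized conic bunde S} and \ref{eq: N7} for the generalized conic bundles. The main obstacle I anticipate is the generalized-conic-bundle case: one must verify that the enumeration of the six such pencils on $\widetilde{R}$ is complete and that they are genuinely inequivalent under $\Aut(S_{5,5})$, and here the computation must be carried out on the blow-up $\widetilde{R}$ rather than on $R_{5,5}$, with parametrizing pencils that may have degree $\geq 4$, forcing the conic-parametrization variant of the algorithm (conditions $(\star)$ and $(\dagger)$) in place of simple projection from a point.
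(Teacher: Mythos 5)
Your overall strategy coincides with the paper's: take the count of $13$ from Nishiyama's classification (Table \eqref{table ell fib on S}), use that $\iota$ acts trivially on $\NS(S_{5,5})$ to exclude fibrations of type 3, realize fibrations 5, 9, 12 by the conic bundles $B_3$, $B_2$, $B_1$ (with $B_4$ giving nothing new by the symmetry of the two $I_5$ branch fibers), fibrations 6, 10, 11 by splitting genus-1 pencils, fibration 13 by $\E_R$, and the remaining six by generalized conic bundles, with everything identified through the singular-fiber configurations, which are pairwise distinct. Note that your final worry about completeness and inequivalence of the six generalized conic bundles is already resolved by your own matching step, exactly as in the paper: once thirteen fibrations with pairwise distinct invariants are exhibited and Nishiyama's list is known to be complete, the bijection is automatic and no separate analysis of $\Aut(S_{5,5})$-equivalence is needed.

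The one step that would fail as you state it is the claim that equation \eqref{eq: N7} is produced by the algorithm of Section \ref{subsec: an algorithm (generalized) conic bundle} applied to the double-cover model \eqref{eq: S 2:1 cover of P2}. The generalized conic bundle $N_7$ pushes down to a pencil of \emph{septics} in $\PP^2$ (degree $e=7$): projection from a point requires a base point of multiplicity $e-1=6$, and the variant you invoke, conditions $(\star)$ and $(\dagger)$, is set up for pencils parametrizable by conics and, in the form $(\dagger)$, only for quartics; neither applies here, and the paper does not attempt to run the algorithm on this pencil. Instead the paper uses a different idea that your proposal is missing: $I_{14}^*$ is a \emph{maximal} singular fiber, so a K3 surface carrying an elliptic fibration with a fiber of type $I_{14}^*$ is unique and its Weierstrass equation is classically known by \cite[Theorem 1.2]{Shioda}; the paper merely rewrites Shioda's equation, normalizing the parameter on $\mathbb{P}^1_{(\tau:\sigma)}$ so that the four $I_1$ fibers sit at the points corresponding to septics in $|N_7|$ tangent to $m_2$ at a smooth point. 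Without this substitute argument your plan yields only twelve of the thirteen equations; the remaining cases you describe (fibrations 1, 2, 3, 8 via $(\dagger)$, fibration 4 via projection from a point, and the splitting pencils via the resultant criterion of Lemma \ref{lem: branch}) do go through exactly as in the paper.
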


To prove this, we deeply analyze the elliptic fibrations on $S_{5,5}$ induced by splitting genus $1$ pencils and generalized conic bundles, in particular giving an algorithm to find the Weierstrass equation of any elliptic fibration induced by a splitting genus 1 pencil.


The K3 surface $S_{5,5}$ can be also described as the (unique!) K3
surface which admits a non-symplectic involution fixing 10
rational curves. Indeed, by our construction it is clear that
$\iota$ fixes 10 rational curves (the inverse image of the
components of the two $I_5$ fibers). The fact that this K3 surface
is unique is classically known, and due to Nikulin, see
\cite{Nik}. The transcendental lattice of this K3 surface is
$T_S\simeq \langle 2\rangle\oplus \langle 2\rangle$. The elliptic
fibrations on this K3 surface are classified by Nishiyama, see
\cite[Table 1.2]{Nish}, who used a lattice theoretic method that
we will apply later to a different K3 surface, in Section \ref{sec: the K3 surface X}. The complete list of the elliptic fibrations is the
following:
\begin{equation}\label{table ell fib on S}
\begin{array}{lr}
\begin{array}{|c|c|c|}
\hline
\mbox{n}^o&\mbox{singular fibers}&MW\\
\hline
1&2II^*+2I_2&\{1\}\\
\hline
2&II^*+I_6^*+2I_1&\{1\}\\
\hline
3&I_{12}^*+2I_2+2I_1&\Z/2\Z\\
\hline
4&2III^*+I_0^*&\Z/2\Z\\
\hline
5&III^*+I_6^*+I_2+I_1&\Z/2\Z\\
\hline
6&I_{18}+I_2+4I_1&\Z/3\Z\\
\hline
\end{array}&
\begin{array}{|c|c|c|}
\hline
\mbox{n}^o&\mbox{singular fibers}&MW\\
\hline
7&I_{14}^*+4I_1&\{1\}\\
\hline
8&I_8^*+I_2^*+2I_1&\Z/2\Z\\
\hline
9&2I_4^*+2I_2&\left(\Z/2\Z\right)^2\\
\hline
10&I_{16}+I_4+4I_1&\Z/4\Z\\
\hline
11&IV^*+I_{12}+4I_1&\Z\times\Z/3\Z\\
\hline
12&3I_2^*&\left(\Z/2\Z\right)^2\\
\hline
13&2I_{10}+4I_1&\Z/5\Z\\
\hline
\end{array}\end{array}
\end{equation}

Since the involution $\iota$ acts as the identity on the
N\'eron--Severi group of $S_{5,5}$, there are no fibrations of type 3 on
this K3 surface.

The fibration 13 in Table \eqref{table ell fib on S} is the one
induced by the fibration $\E_{R_{5,5}}$ and it has equation
\eqref{eq: Weierstrass S, induced by R}. By Section \ref{sec:
elliptic fibrations induced by conic bundles}, the fibrations 5, 9,
and 12 are induced by conic bundles and their equations are
\eqref{eq: w B3  on S}, \eqref{eq: w B2  on S}, and \eqref{eq: w
B1 on  S}, respectively.

The other fibrations are induced either by generalized conic
bundles or by splitting genus 1 pencils.

An elliptic
fibration induced by a splitting genus 1 pencil corresponds to a fibration of genus 1 curves on a non-relatively minimal rational elliptic surface (that is, this fibration admits $(-1)$-curves as components of some fibers). The original relatively minimal rational elliptic surface $R_{5,5}$ can be recovered from this non-minimal surface by blowing down some divisors. A different choice of divisors to blow-down, namely the $(-1)-$curves which are components of the fibers of the splitting genus 1 pencil, gives us another rational elliptic surface on which the splitting genus 1 pencil above corresponds to a relatively minimal elliptic fibration. Hence each fibration given by a splitting genus 1 pencil is indeed induced by a rational elliptic surface (different from
$R_{5,5}$) by a base change of order 2 whose branch locus
consists of 10 curves. Considering the list of elliptic
fibrations on $S_{5,5}$ given in Table \ref{table ell fib on S} one observes immediately that the fibrations 6,
10 and 11 could be of this type, i.e., they could be induced by
splitting genus 1 pencils, (this is in fact proved in \cite{GS}). Indeed they present some fibers which appear in pairs (each pair is good candidate to be the inverse image of a unique fiber on the rational elliptic surface) and at most two other fibers, which do not appear an even number of time, but which are either of type  $I_{2n}$ or of type $IV^*$. These fibers are the one obtained by base change of order 2 branched over $I_n$ or $IV$-fibers, so they could be the ramification fibers of the base change. The others fibration in Table \ref{table ell fib on S} have not the same properties, thus can not be induced by rational elliptic surface by a base change of order 2.
We already observed that the fibration
13 is induced by the elliptic fibration on $R_{5,5}$ and that the
fibrations 5, 9 and 12 are induced by conic bundles, so the fibrations 1, 2, 3, 4, 7, 8 are induced by generalized conic bundles.

\subsection{Splitting genus 1 fibrations}


\subsubsection{An example, the fibration 6}
We give an example of splitting genus 1 pencil of curves: we are
looking for a fiber of type $I_{18}$ and it is given by
$$\begin{array}{lll} W_X&:=&Q_0+\Omega_0^{(2)}+\Omega_{4,0}^{(2)}+\Omega_{4}^{(2)}+\Omega_{4,3}^{(2)}+\Omega_3^{(2)}+\Omega_{3,2}^{(2)}+\Omega_{2}^{(2)}+\Omega_{2,1}^{(2)}+\Omega_1^{(2)}+\\
&&+Q_3+\Omega_3^{(1)}+\Omega_{3,2}^{(1)}+\Omega_2^{(1)}+\Omega_{2,1}^{(1)}+\Omega_1^{(1)}+\Omega_{1,0}^{(1)}+\Omega_0^{(1)}.\end{array}$$

Using \eqref{tildeR curve ids}, the class $W_R \colonequals \pi_*W_X$ is
\begin{multline}2F_1+E_1+2G_1+\ell_1+2H_2+E_2+2G_2+\ell_3+2E_{T_1}+\ell_2+2F_4+E_4+2G_4+m_3+2H_3+E_3+2G_3+m_1=\\
5h-2E_1-2F_1-2G_1-4H_1-E_2-2F_2-G_2-H_2-2E_3-4F_3-2G_3-2H_3-E_4-F_4-G_4-2H_4-2E_{Q_5}-E_{T_2}.\end{multline}
Since we know that the curves in the linear system described
split in the double cover, we can assume that the class $W_R$ is
both the push down and the geometric image of a fiber of our
fibration (i.e. $\pi_*(W_X)=\pi(W_X)=W_R$).

This class is the strict transform on $\widetilde{R}$ of quintics in $\mathbb{P}^2$ with the following properties:
they have a tacnode in $Q_1$ with principal tangent $\ell_2$;
they have a tacnode in $Q_3$ with principal tangent $\ell_3$;
they have a node in $Q_5$;
the tangent in $Q_2$ is $m_2$;
the tangent in $Q_4$ is $m_2$;
they pass through $T_2$.

This gives the following families of
quintics
\begin{multline}\label{eq: type 2, ok}-bx_0^5+bx_0^4x_1+bx_0^4x_2-ex_0^3x_1x_2+ex_0^2x_1^2x_3+ex_0^2x_1x_2^2+\\(-3b-e)x_0x_1^2x_2^2+bx_1^3x_2^2+bx_1^2x_2^3 =0.\end{multline}

This is the equation of the splitting genus 1 pencil that we are
looking for. It indeed corresponds to a pencil of curves of genus 1 parametrized by $(b:e)$.

We now have to intersect the branch sextic given in equation
\eqref{eq: S 2:1 cover of  P2} with \eqref{eq: type 2,
ok}. The resultant of the polynomials \eqref{eq: S 2:1 cover of  P2} and \eqref{eq: type 2, ok} with respect to the variable
$x_0$ is
$$-x_1^{12}x_2^{12}b^3(x_1-x_2)^4(x_2+x_1)^2(e+2b).$$
We observe that all the solutions in $(x_1,x_2)$ have even
multiplicities, as is necessarily if the double cover splits into two
curves, isomorphic to the base curve, see \cite[Section 3]{GS}.
More precisely, the curve splits after the base change of order two branched in $(b:e)=(0:1)$ and $(b:e)=(1:-2)$,  cf. Lemma \ref{lem: branch} below.
In order to write down explicitly the Weierstrass form of the
elliptic fibration on $X_{5,5}$ one first finds the rational
elliptic fibration given by the splitting pencil of genus 1 curves
\eqref{eq: type 2, ok}, and then one performs the base change of
order two.
This can be computed by any computer algebra system and it is
$$y^2=x^3+A(b:e)x+B(b:e),$$ where
$$A(b:e):=\frac{23}{48}b^4-\frac{5}{12}b^3e-\frac{1}{8}b^2e^2+\frac{1}{12}be^3-\frac{1}{48}e^4$$
and
$$B(b:e):=-\frac{181}{864}b^6-\frac{17}{144}b^5e+\frac{31}{288}b^4e^2-\frac{1}{54}b^3e^3-\frac{5}{288}b^2e^4+\frac{1}{144}be^5-\frac{1}{864}e^6.$$

The discriminant of this rational elliptic fibration is
$$\frac{1}{16}b^9(e+2b)(e^2-5be+13b^2)$$
and the fibration has one fiber of type $I_9$ and three other
singular fibers, all of type $I_1$.

Now we consider the base change of order 2 branched in
$(b:e)=(0:1)$ and $(b:e)=(1:-2)$. It can be directly written as a map
$\mathbb{P}^1_{(\beta:\epsilon)}\ra\mathbb{P}^1_{(b:e)}$, where
$b=\beta^2$ and $e=-\beta^2+2\beta\epsilon+\epsilon^2$.

So we obtain a new elliptic fibration on $S_{5,5}$ whose equation is
\begin{equation}\label{eq: Weierstrass form type 2 on
S}y^2=x^3+A'(\beta:\epsilon)x+B'(\beta:\epsilon),\end{equation}
$$A'(\beta:\epsilon):=\frac{23}{48}\beta^8-\frac{5}{12}\beta^6(-\beta^2+2\beta\epsilon+\epsilon^2)-\frac{1}{8}\beta^4(-\beta^2+2\beta\epsilon+\epsilon^2)^2+$$
$$+\frac{1}{12}\beta^2(-\beta^2+2\beta\epsilon+\epsilon^2)^3-(1/48)(-\beta^2+2\beta\epsilon+\epsilon^2)^4$$
and
$$B'(\beta:\epsilon):=-(1/108)\beta^{12}-(5/9)\beta^{11}\epsilon-(7/18)\beta^{10}\epsilon^2+(28/27)\beta^9\epsilon^3+(7/12)\beta^8\epsilon^4
-(11/12)\beta^7\epsilon^5 +$$
$$-(35/72)\beta^6\epsilon^6+(1/3)\beta^5\epsilon^7+(5/24)\beta^4\epsilon^8-(5/108)\beta^3\epsilon^9-(1/18)\beta^2\epsilon^{10}-(1/72)\epsilon^{11}\beta-(1/864)\epsilon^{12}.$$
The discriminant is
$$(1/16)\beta^{18}(19\beta^4-14\beta^3\epsilon-3\beta^2\epsilon^2+4\beta\epsilon^3+\epsilon^4)(\epsilon+\beta)^2.$$
This fibration has a fiber of type $I_{18}$, as expected, one
fiber of type $I_2$ (in $(\beta:\epsilon)=(1:-1)$), and four fibers of
type $I_1$. By construction this elliptic fibration exhibits $S_{5,5}$
as the double cover of a rational elliptic surface with
one fiber of type $I_9$ and three fibers of type $I_1$.

\subsubsection{Splitting genus 1 fibration: an algorithm}\label{subsec: algorithm splitting genus 1}
The aim of this section is to generalize the previous construction
to other splitting genus 1 pencils.

\noindent \textbf{Setup.} Let $\pi \colon V \to \PP^1$ be a K3 surface which is a double cover of a (not necessarily minimal) rational elliptic surface $\tilde{R}$ branched over two fibers.  If $\mathcal{H} \colon \tilde{R} \to \PP_{(b:e)}^1$ is a splitting genus $1$ pencil, then the induced elliptic fibration on $V$ comes via pullback from a double cover $\PP^1_{(\beta: \epsilon)} \to \PP_{(b:e)}^1$.  Hence given an equation for the fibration $\mathcal{H}$, it suffices to find the branch points of the map $\PP^1_{(\beta: \epsilon)} \to \PP_{(b:e)}^1$ in order to find the Weierstrass equation for the elliptic fibration on $V$.  We now explain how to do this in general, when the branch curves and equations for $\mathcal{H}$ are given in $\PP^2$.

Assume that the equation of $V$ as double
cover of $\mathbb{P}^2$ is given by
$w^2=f_3(x_0:x_1:x_2)g_3(x_0:x_1:x_2)$. Let
$h((x_0:x_1:x_2),(b:e))$ be the equation of the pushforward of a splitting genus 1 pencil from $\tilde{R}$ to $\PP^2$.
The equation $h$ is homogeneous of some degree in the coordinate $(x_0:x_1:x_2)$ on $\PP^2$ and linear in the coordinate $(b:e)$ of the base $\PP^1$ of the pencil.  For every $(b:e)$, the curve with equation $h((x_0:x_1:x_2),(b:e))$ is of arithmetic genus $1$.

Write $\cup_{k\in K} C_k$ for the irreducible components of the branch curves $f_3g_3 = 0$.  For $D \subset \pp^2$, write $\text{mult}_{C_k}(D)$ for the multiplicity of the component $C_k$ in $D$. Then we have the following.
\begin{lemma}\label{lem: branch}
A plane curve $D_{(b:e)} \subset \PP^2$ that is member of the pencil $\mathcal{H}$ is a branch curve for the double cover
\begin{center}
\begin{tikzcd}
V \arrow{d} \arrow{r} & \tilde{R} \arrow{d}{\mathcal{H}} \\
\PP^1_{(\beta:\epsilon)} \arrow{r}{2:1} & \PP^1_{(b:e)}
\end{tikzcd}
\end{center}
induced by the splitting genus 1 pencil
if and only if there exists $k \in K$ such that $\text{mult}_{C_k}(D_{(b:e)}) \neq 0$.
\end{lemma}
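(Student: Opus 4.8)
The plan is to work directly with the double cover $\pi\colon V \to \tilde{R}$ and to reformulate the assertion ``$D_{(b:e)}$ is a branch curve'' as ``$\pi^{-1}(\tilde{D})$ is connected'', where $\tilde{D}\subset\tilde{R}$ denotes the strict transform of the plane member $D_{(b:e)}$, realised as the fibre of $\mathcal{H}$. Indeed, a point $(b:e)$ is a branch point of the degree-two base map $\PP^1_{(\beta:\epsilon)}\to\PP^1_{(b:e)}$ precisely when the fibre of $\mathcal{H}$ over it fails to split in $V$, i.e.\ when $\pi^{-1}(\tilde{D})$ is a single curve double covering $\tilde{D}$ rather than two disjoint copies of $\tilde{D}$. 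So the whole proof reduces to a clean splitting criterion for the restricted cover $\pi^{-1}(\tilde{D})\to\tilde{D}$.

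The linchpin is the intersection computation $\tilde{D}\cdot B=0$, where $B$ is the branch divisor of $\pi$ (the two fibres of $\mathcal{E}_{\tilde{R}}$, of class $-2K_{\tilde{R}}$ since $V$ is a K3 surface and hence $\mathcal{L}=-K_{\tilde{R}}$). As $\tilde{D}$ is a fibre of the genus-one fibration $\mathcal{H}$ one has $\tilde{D}^2=0$, and adjunction gives $\tilde{D}\cdot K_{\tilde{R}}=-\tilde{D}^2=0$, whence $\tilde{D}\cdot B=-2\,\tilde{D}\cdot K_{\tilde{R}}=0$. The same input shows that the line bundle defining the cover restricts trivially to $\tilde{D}$: combining the triviality of the normal bundle $\mathcal{O}(\tilde{D})|_{\tilde{D}}=\mathcal{O}_{\tilde{D}}$ of a fibre with the triviality of the dualizing sheaf $\omega_{\tilde{D}}=\mathcal{O}_{\tilde{D}}$ of an arithmetic-genus-one fibre yields $\mathcal{L}|_{\tilde{D}}=\omega_{\tilde{D}}^{-1}\otimes\mathcal{O}(\tilde{D})|_{\tilde{D}}=\mathcal{O}_{\tilde{D}}$.

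With these two facts both implications are short. If $D_{(b:e)}$ shares no component with the branch sextic $f_3g_3=0$, then $\tilde{D}$ shares no component with $B$, so $\tilde{D}\cap B$ is finite of total multiplicity $\tilde{D}\cdot B=0$ and is therefore empty; the cover $\pi^{-1}(\tilde{D})\to\tilde{D}$ is then \'etale, given by the trivial bundle $\mathcal{L}|_{\tilde{D}}=\mathcal{O}_{\tilde{D}}$ together with a nowhere-vanishing (hence constant) section, so it is the split cover $\tilde{D}\sqcup\tilde{D}$ and $D_{(b:e)}$ is not a branch curve. Conversely, if $\tilde{D}$ contains a component $C\subset B$, then $C$ lies in the ramification locus of $\pi$, so $\pi^{-1}(C)$ is a single curve mapping isomorphically to $C$; were $\pi^{-1}(\tilde{D})$ to split into two copies of $\tilde{D}$, the preimage of $C$ would be two disjoint copies of $C$, a contradiction, so $\pi^{-1}(\tilde{D})$ is connected and $D_{(b:e)}$ is a branch curve. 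The final equivalence ``$\mathrm{mult}_{C_k}(D_{(b:e)})\neq 0$ for some $k$'' $\Leftrightarrow$ ``$\tilde{D}$ contains a component of $B$'' is immediate, since $B$ is the union of the strict transforms of the components $C_k$ of $f_3g_3=0$.

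The step I expect to require the most care is the passage to this clean intersection-theoretic picture for the degenerate members of $\mathcal{H}$: one must verify that the plane member $D_{(b:e)}$ really corresponds to the full reduced fibre of $\mathcal{H}$ on $\tilde{R}$ (so that $\tilde{D}^2=0$ and no vertical or exceptional component is dropped in passing to the strict transform), and that the computation $\omega_{\tilde{D}}=\mathcal{O}_{\tilde{D}}$ survives for the singular Kodaira fibre types that actually occur. This is exactly the behaviour seen in the worked example, where the resultant factors with all exponents in the fibre variables even, except along the finitely many parameter values at which $D_{(b:e)}$ acquires one of the lines of $f_3g_3=0$ as a component.
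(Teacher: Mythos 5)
Your reformulation (branch point of $\PP^1_{(\beta:\epsilon)}\to\PP^1_{(b:e)}$ $\Leftrightarrow$ connected preimage of the fibre) and your argument for the direction ``$D_{(b:e)}$ contains a branch component $\Rightarrow$ branch curve'' are essentially the paper's own proof: the preimage of a branch component is a single curve, so the preimage of the fibre cannot break into two disjoint copies. For the converse direction, however, you take a different route from the paper (which argues that a member meeting the branch sextic only in points of even multiplicity splits into two disjoint elliptic curves), and it is here that you have a genuine gap. Your argument hinges on the blanket claim that $\omega_{\tilde{D}}\cong\mathcal{O}_{\tilde{D}}$ for an arithmetic-genus-one fibre $\tilde{D}$ of $\mathcal{H}$. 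That claim is false for non-relatively-minimal genus-1 fibrations, and $\mathcal{H}\colon\tilde{R}\to\PP^1$ is precisely such a fibration --- the paper stresses that its fibres contain $(-1)$-curves (e.g.\ the fibre $W_R$ of fibration 6 contains $2F_1$, $2G_1$, etc.): if a $(-1)$-curve $E'$ is a component of $\tilde{D}$, then $\deg\bigl(\omega_{\tilde{D}}|_{E'}\bigr)=K_{\tilde{R}}\cdot E'=-1\neq 0$, so $\omega_{\tilde{D}}$ is not trivial. You flag exactly this point at the end as ``the step requiring the most care,'' but you never close it, and appealing to the factorization pattern in the worked example is evidence, not proof.

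The gap is repairable, because you only invoke the triviality for fibres disjoint from the branch divisor $B$, and for those it is true --- but this needs an argument you do not give. Since $B\equiv -2K_{\tilde{R}}$, every $(-1)$-curve $E'$ satisfies $E'\cdot B=-2K_{\tilde{R}}\cdot E'=2>0$, so a fibre disjoint from $B$ contains no $(-1)$-curves; each of its components $C$ then has $K_{\tilde{R}}\cdot C=0$, hence is a $(-2)$-curve (or the fibre is irreducible of genus one), and the standard argument --- $h^0(\omega_{\tilde{D}})=h^1(\mathcal{O}_{\tilde{D}})=1$ for a connected reduced fibre, and a nonzero section of a line bundle of degree zero on every component of a connected curve is nowhere vanishing --- yields $\omega_{\tilde{D}}\cong\mathcal{O}_{\tilde{D}}$. (One should also justify that such a fibre is reduced and that, for special members, the fibre of $\mathcal{H}$ is the full total transform of $D_{(b:e)}$ rather than its strict transform, so that $\tilde{D}^2=0$ really applies --- the other point you flag but leave open.) With these additions, your route ($\tilde{D}\cdot B=0$, hence disjointness, hence an \'etale cover defined by a trivial 2-torsion bundle, hence split) is a clean and arguably more conceptual alternative to the paper's even-multiplicity argument; as written, though, the decisive step rests on a false general statement.
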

\begin{proof}
If $D$ meets the branch curves transversely, it does so only in points of even multiplicity, and so splits in the double cover as two disjoint elliptic curves.  It suffices, therefore, to show that if $D$ contains at least one component $C_k$, the support of the preimage of $D$ under the $2:1$ map $\pi \colon V \to \PP^2$ is connected.  This follows from the fact that the preimage of $C_k$ is a double curve, the support of which maps isomorphically onto $C_k$, and so every component of $\pi^*D$ must meet this curve.
\end{proof}

We will also make use of the following elementary fact.  Let $D_{(b_0:e_0)} \subset \PP^2$ be a plane curve in the pencil $\mathcal{H}$ with equation $h((x_0:x_1:x_2),(b_0:e_0))$.  Denote by $r((x_1:x_2)(b:e))$ the
resultant of $f_3(x_0:x_1:x_2)g_3(x_0:x_1:x_2)$ and $h((x_0:x_1:x_2),(b:e))$ with respect to $x_0$.
\begin{lemma}
The resultant $r((x_1:x_2)(b:e))$ vanishes to order
\[=\sum_k \operatorname{mult}_{C_k}(D_{(b_0:e_0)}) \cdot \begin{cases} \deg C_k &: \  (1:0:0) \not\in C_k \\ \left( \deg C_k - 1\right) & : \  (1:0:0) \in C_k\end{cases} \]
at $(b:e) = (b_0:e_0)$.
\end{lemma}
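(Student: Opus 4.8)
The plan is to evaluate the resultant via the classical product expansion and then read off the order of vanishing one linear factor at a time. Write $p=(1:0:0)$, the centre of the projection $\PP^2\dashrightarrow \PP^1$, $(x_0:x_1:x_2)\mapsto(x_1:x_2)$, that underlies elimination of $x_0$. The order of vanishing of $r$ at $(b:e)=(b_0:e_0)$ means the multiplicity of the linear factor $b_0e-e_0b$ dividing $r\in\cc[x_1,x_2][b,e]$; this equals the order of the one--variable restriction $(b:e)\mapsto r\big((x_1^0:x_2^0),(b:e)\big)$ at $(b_0:e_0)$ for a generic point $(x_1^0:x_2^0)\in\PP^1$, since the complementary factor of $(b_0e-e_0b)^N$ is then non--vanishing. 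Fixing such a generic $(x_1^0:x_2^0)$, let $L$ be the line through $p$ mapping to it. Treating $f_3g_3$ and $h$ as polynomials in $x_0$ with coefficients in $\cc[x_1,x_2,b,e]$, the product formula for the resultant gives
\[
r\big((x_1^0:x_2^0),(b:e)\big)=\pm\,\operatorname{lc}_{x_0}(f_3g_3)^{\deg_{x_0}h}\prod_{w}h_{(b:e)}(w),
\]
where $w$ ranges over the finite roots in $x_0$ of $f_3g_3|_L$, i.e. over the points of $L\cap\{f_3g_3=0\}$ distinct from $p$, counted with intersection multiplicity along $L$.

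The key observation is that $\operatorname{lc}_{x_0}(f_3g_3)$ does not involve $(b:e)$, so the entire $(b:e)$--dependence sits in the product of the forms $h_{(b:e)}(w)$, each of which is \emph{linear} in $(b:e)$. Hence each factor vanishes to order exactly $1$ at $(b_0:e_0)$ precisely when $h_{(b_0:e_0)}(w)=0$, i.e. when $w\in D_{(b_0:e_0)}$, and to order $0$ otherwise; the only exception would be $h_{(b:e)}(w)\equiv 0$, which forces $w$ to be a base point of $\mathcal H$, impossible for a generic point $w$ on a branch component. Therefore
\[
\operatorname{ord}_{(b_0:e_0)}r=\#\big\{\,w\in\big(L\cap\{f_3g_3=0\}\big)\setminus\{p\}\ :\ w\in D_{(b_0:e_0)}\,\big\},
\]
the points counted with their multiplicity along $L$.

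It then remains to count these $w$ component by component. Fix a branch component $C_k$. Because $L$ and $(x_1^0:x_2^0)$ are generic, each $w$ coming from $C_k$ is a generic point of $C_k$, so $w\in D_{(b_0:e_0)}$ holds exactly when $C_k\subseteq D_{(b_0:e_0)}$, that is when $\operatorname{mult}_{C_k}(D_{(b_0:e_0)})\neq 0$; otherwise $C_k\cap D_{(b_0:e_0)}$ is finite and a generic $w$ avoids it. The number of finite roots $w$ supported on $C_k$ is $\deg C_k$ when $p\notin C_k$, and $\deg C_k-1$ when $p\in C_k$: in the latter case the generic line $L$ through $p$ meets $C_k$ at $p$ itself, and this intersection point is exactly the root ``at infinity'' that is discarded when $\deg_{x_0}(f_3g_3)$ drops below $6$. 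Summing the contributions of the components contained in $D_{(b_0:e_0)}$ produces the asserted formula.

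The step that requires the most care, and which I expect to be the main obstacle, is the multiplicity bookkeeping in the counting of the roots $w$, together with the clean separation of the ``finite'' roots from the root at $p$. Concretely, one must verify that, for a generic line $L$ through $p$, the scheme $L\cap\{f_3g_3=0\}$ meets each $C_k$ transversally away from $p$ and has at $p$ exactly the multiplicity $\sum_{k:\,p\in C_k}\operatorname{mult}_{C_k}(f_3g_3=0)$ accounting for the degree drop of $\deg_{x_0}(f_3g_3)$ below $6$; under the standing hypothesis that the branch sextic $f_3g_3=0$ is reduced, each contained $C_k$ is then counted with the multiplicity appearing in the statement, so that the two case values $\deg C_k$ and $\deg C_k-1$ emerge precisely as above. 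I would isolate this transversality-and-degree-drop verification as a short preliminary lemma and then let the product formula do the rest.
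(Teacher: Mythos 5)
Your proof runs along the same lines as the paper's: the Poisson product formula for $\operatorname{Res}_{x_0}$ is exactly the algebraic form of the paper's one-sentence argument (``projecting the scheme-theoretic intersection of $h=0$ and $f_3g_3=0$ from the point $(1:0:0)$''), and your reduction to a generic line $L$ through $(1:0:0)$, the observation that all the $(b:e)$-dependence sits in the linear factors $h_{(b:e)}(w)$, and the component-by-component count of the points $w$ are sound and are carried out at a level of detail the paper omits.

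There is, however, one discrepancy, and it sits precisely in the step you yourself flagged as the delicate one. Since each factor $h_{(b:e)}(w)$ is \emph{linear} in $(b:e)$, it vanishes at $(b_0:e_0)$ to order exactly $1$ whenever $w\in D_{(b_0:e_0)}$, regardless of how many times the component $C_k$ carrying $w$ occurs in $D_{(b_0:e_0)}$. Consequently what your argument actually establishes is
\[
\operatorname{ord}_{(b_0:e_0)} r \;=\; \sum_{k\,:\,C_k\subseteq D_{(b_0:e_0)}} \operatorname{mult}_{C_k}\bigl(\{f_3g_3=0\}\bigr)\cdot\bigl(\deg C_k-m_k\bigr),
\]
where $m_k$ is the multiplicity of the point $(1:0:0)$ on $C_k$ (zero if $(1:0:0)\notin C_k$): the weight on a contained component is its multiplicity in the \emph{branch sextic} (equal to $1$ under your reducedness hypothesis), not the weight $\operatorname{mult}_{C_k}(D_{(b_0:e_0)})$ appearing in the statement. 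Your closing claim that reducedness of $f_3g_3$ makes the statement's weights ``emerge'' is therefore not a valid justification: reducedness of the sextic says nothing about multiplicities of components inside the pencil member $D_{(b_0:e_0)}$, and your own mechanism shows that this latter multiplicity can never enter the order of vanishing. The two formulas agree exactly when every contained $C_k$ occurs in $D_{(b_0:e_0)}$ with multiplicity $1$ and has point multiplicity $1$ at $(1:0:0)$ when it passes through that point; this holds for all the pencils used in the paper, but it is an extra hypothesis, and when it fails your count, not the displayed statement, is the correct one. For instance, for branch curve $x_2(x_0-x_1)=0$ and pencil $h=b(x_0-x_1)^2+ex_1x_2$ one computes $\operatorname{Res}_{x_0}=ex_1x_2^3$, which vanishes to order $1$ at $(b:e)=(1:0)$, whereas the statement's formula predicts $\operatorname{mult}_{C}(D_{(1:0)})\cdot\deg C=2$. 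So your mechanism is right and is the paper's mechanism made precise, but the final identification with the stated formula should be phrased as holding under these multiplicity-one assumptions rather than deduced from reducedness of the sextic alone.
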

\begin{proof}
This follows from the geometric description of the zeros of the resultant in terms of projecting the scheme-theoretic intersection of $h=0$ and $f_3g_3=0$ from the point $(1:0:0)$.
\end{proof}
Writing $\operatorname{ord}_{(b_0:e_0)}$ for the order of vanishing at $(b_0:e_0)$, we may combine these two Lemmas to show the following:

\begin{corollary}
If $\operatorname{ord}_{(b_0:e_0)}\big(r((x_1:x_2)(b:e))\big)>0,$ then the curve $D_{(b_0:e_0)}$ is a branch curve for the double cover induced by the splitting genus 1 pencil.

If $(1:0:0) \not\in \cup_k C_k$ and $D_{(b_0:e_0)}$ is a branch curve for the double cover induced by the splitting genus 1 pencil, then  $\operatorname{ord}_{(b_0:e_0)}\big(r((x_1:x_2)(b:e))\big)>0.$
\end{corollary}

We can therefore determine the relevant branch points from the resultant.  This leads to the following algorithm.


\noindent\textbf{Algorithm.}\begin{enumerate}
\item Compute the resultant $r((x_1:x_2)(b:e))$ of the two polynomials
$f_3(x_0:x_1:x_2)g_3(x_0:x_1:x_2)$ and $h((x_0:x_1:x_2),(b:e))$ in
one variable, say $x_0$.
\item Observe that
$r((x_1:x_2)(b:e))=c_1(b:e)c_2(b:e)s((x_1:x_2),(b:e))^2$, where
$c_i(b:e)$ are homogeneous polynomials each with a unique root,
denoted by $(b_i:e_i)$. (If $(1:0:0)$ is in the branch curves, it may be necessary to change coordinates first.)
\item Write the Weierstrass form of
$h((x_0:x_1:x_2),(b:e))$, by applying the standard
transformations. This is the equation of a rational elliptic
surface, and the base of the fibration is $\mathbb{P}^1_{(b:e)}$.
\item Consider the base change
$\mathbb{P}^1_{(\beta:\epsilon)}\ra\mathbb{P}^1_{(b:e)}$ given by
$(b=\beta^2(b_1/e_1)+\epsilon^2,e=\beta^2+(e_2/b_2)\epsilon^2)$ (cf. \eqref{eq: base change}). Substituting this base change in the
previous Weierstrass equation, one finds the Weierstrass equation
of the elliptic fibration on the K3 surface $V$ whose base is
$\mathbb{P}^1_{(\beta:\epsilon)}$.
\end{enumerate}
\subsection{The elliptic fibrations on $S_{5,5}$}
In this section we want to describe all the elliptic fibrations on
$S_{5,5}$ giving  equations for each of them.

In \cite{CG} a model of $S_{5,5}$ as a double
cover of $\mathbb{P}^2$ was given and the elliptic fibrations
induced by (generalized) conic bundles are already studied geometrically in that
context. Here we explicitly describe in our context both the fibrations induced by generalized conic bundles and the ones induced by splitting genus 1 curves, giving also a Weierstrass equation for each of them.

\subsubsection{Elliptic fibrations induced by splitting genus 1 pencils}
The fibration 11 of Table \eqref{table ell fib on S} is induced by
the class of the fiber
$$M_1:=Q_0+\Omega_0^{(1)}+\Omega_{1,0}^{(1)}+\Omega_1^{(1)}+Q_1+\Omega_2^{(2)}+\Omega_{3,2}^{(2)}+\Omega_3^{(2)}+\Omega_{4,3}^{(2)}+\Omega_4^{(2)}+\Omega_{4,0}^{(2)}+\Omega_0^{(2)}$$
which is the class of a fiber of type $I_{12}$. The curves $\Omega_3^{(1)}$, $\Omega_{3,2}^{(1)}$, $\Omega_2^{(1)}$, $\Omega_{3,4}^{(1)}$, $\Omega_4^{(1)}$, $Q_3$, and $\Omega_1^{(2)}$ are orthogonal to the components of the $I_{12}$-fiber and form a fiber of type $IV^*$. The classes $Q_2$, $Q_4$, $\Omega_{1,2}^{(1)}$, $\Omega_{4,0}^{(1)}$, $\Omega_{1,0}^{(2)}$, $\Omega_{2,1}^{(1)}$ are sections of the elliptic fibration induced by $|M_1|$. We observe that there are 6 curves fixed by $\iota$ among the components of the fiber of type $I_{12}$ (the curves $\Omega_i^{(j)}$ for $(i,j)=(0,1)$, $(1,1)$, $(2,2)$, $(3,2)$, $(4,2)$, $(0,2)$) and 4 among the components of the $IV^*$-fiber (the curves $\Omega_j^{(j)}$ for $(i,j)=(3,1)$, $(2,1)$, $(4,1)$, $(1,2)$).

The push-down of the class $M_1$ is $$3h-E_1-F_1-G_1-2H_1-E_2-2F_2-G_2-H_2-E_3-F_3-G_3-H_3-E_{T_1}-E_{T_2}-E_5$$
which corresponds to a pencil of cubics passing through $Q_1$ with tangent line $\ell_2$, through $Q_2$ with tangent line $m_2$, through $Q_3$, $E_{T_1}$, $E_{T_2}$, and $Q_5$.
The equation of this pencil is
\begin{equation}\label{eq: splitting genus 1 fib 11} b(x_0^2x_1-x_0x_1^2+x_1^2x_2+x_1x_2^2-2x_0^2x_2)+e(x_0x_1x_2-x_0^2x_2)=0.\end{equation}

The Weierstrass form of the (rational)
elliptic fibration associated to the pencil \eqref{eq: splitting
genus 1 fib 11} is
$$y^2=x^3-\frac{1(28b^2+4eb+e^2)(2b+e)^2}{48}x-\frac{(376b^4+176eb^3+60e^2b^2+8e^3b+e^4)(2b+e)^2}{864},$$
whose discriminant is $b^6(31b^2+4eb+e^2)(2b+e)^4/16$. This
elliptic fibration has a fiber of type $I_6$ on $b=0$ and one of
type $IV$ on $(b:e)=(1:-2)$.

%

The fibration 10 of Table \eqref{table ell fib on S} is induced by the class of the fiber
$$M_2:=Q_0+\Omega_0^{(1)}+\Omega_{1,0}^{(1)}+\Omega_1^{(1)}+\Omega_{2,1}^{(1)}+\Omega_{2}^{(1)}+\Omega_{3,2}^{(1)}+\Omega_3^{(1)}+\Omega_{4,3}^{(1)}+\Omega_4^{(1)}+Q_4+\Omega_3^{(2)}+\Omega_{4,3}^{(2)}+\Omega_4^{(2)}+\Omega_{4,0}^{(2)}+\Omega_0^{(2)},$$
which is the class of a fiber of type $I_{16}$.

The push-down of the class $M_2$ is $$4h-E_1-F_1-G_1-2H_1-E_2-F_2-2G_2-H_2-E_3-2F_3-G_3-H_3-E_4-2F_4-G_4-H_4-2E_{T_2}-2E_5$$
which corresponds to a pencil of quartics with bitangent lines $l_2$ (in $Q_2$ and $Q_4$) and $l_3$ (in $Q_1$ and $Q_3)$ and having two nodes in $E_{T_2}$ and $E_{5}$.
The equation of this pencil is
\begin{multline}\label{eq: splitting genus 1 fib 10}
bx_0^4-2bx_0^3x_1+bx_0^2x_1^2-2bx_0^3x_2+(3b+4e)x_0^2x_1x_2+(-b-4e)x_0x_1^2x_2+ \\ ex_1^3x_2+bx_0^2x_2^2+(-b-4e)x_0x_1x_2^2+2ex_1^2x_2^2+ex_1x_2^3 =0.\end{multline}

Using the algorithm, we find the following Weierstrass equations for elliptic fibrations on $S_{5,5}$ induced by splitting genus $1$ pencils.

\begin{equation}\label{eq: splitting genus 1 pencils}
\begin{array}{|c|c|} \hline
i&\mbox{elliptic fibrations }\\ \hline
6 & \begin{array}{l} A = \frac{23}{48}\beta^8-\frac{5}{12}\beta^6(-\beta^2+2\beta\epsilon+\epsilon^2)-\frac{1}{8}\beta^4(-\beta^2+2\beta\epsilon+\epsilon^2)^2+\\
\qquad +\frac{1}{12}\beta^2(-\beta^2+2\beta\epsilon+\epsilon^2)^3-(1/48)(-\beta^2+2\beta\epsilon+\epsilon^2)^4 \\ B=-(1/108)\beta^{12}-(5/9)\beta^{11}\epsilon-(7/18)\beta^{10}\epsilon^2+(28/27)\beta^9\epsilon^3+(7/12)\beta^8\epsilon^4
-(11/12)\beta^7\epsilon^5 + \\
\qquad -(35/72)\beta^6\epsilon^6+(1/3)\beta^5\epsilon^7+(5/24)\beta^4\epsilon^8-(5/108)\beta^3\epsilon^9-(1/18)\beta^2\epsilon^{10}-(1/72)\epsilon^{11}\beta-(1/864)\epsilon^{12} \\
\Delta =  (1/16)\beta^{18}(19\beta^4-14\beta^3\epsilon-3\beta^2\epsilon^2+4\beta\epsilon^3+\epsilon^4)(\epsilon+\beta)^2
\end{array} \\ \hline
11 &  \begin{array}{l}A = -(24\epsilon^4+\beta^4)\beta^4/48, \
B = -(216\epsilon^8+36\beta^4\epsilon^4+\beta^8)\beta^4/864  \\
\Delta = \epsilon^{12}(27\epsilon^4+\beta^4)\beta^8/16
\end{array}\\ \hline
10 & \begin{array}{l} A = (-\beta^8+16\epsilon^4\beta^4-16\epsilon^8)/48, \
B = -(\beta^4-8\epsilon^4)(-8\epsilon^8-16\epsilon^4\beta^4+\beta^8)/864 \\
\Delta =\epsilon^{16}\beta^4(2\epsilon-\beta)(2\epsilon+\beta)(4\epsilon^2+\beta^2)/16 \end{array} \\ \hline
\end{array}
\end{equation}

\subsubsection{Elliptic fibrations induced by generalized conic
bundles}
Let us consider the divisors:
$$\begin{array}{lll}N_1:&=&2\Omega_0^{(1)}+4Q_0+6\Omega_0^{(2)}+5\Omega_{1,0}^{(2)}+4\Omega_{1}^{(2)}+3\Omega_{2,1}^{(2)}+2\Omega_{2}^{(2)}+\Omega_{3,2}^{(2)}+3\Omega_{4,0}^{(2)},\\N_2:&=&2\Omega_4^{(2)}+4\Omega_{4,0}^{(2)}+6\Omega_0^{(2)}+5\Omega_{1,0}^{(2)}+4\Omega_{1}^{(2)}+3\Omega_{2,1}^{(2)}+2\Omega_{2}^{(2)}+\Omega_{3,2}^{(2)}+3Q_0,\\
N_3:&=&Q_0+\Omega_{4,0}^{(1)}+Q_4+\Omega_{4,3}^2+2\left(\Omega_0^{(1)}+\Omega_{1,0}^{(1)}+\Omega_1^{(1)}+\Omega_{2,1}^{(1)}+\right.\\&+&\left.\Omega_2^{(1)}+\Omega_{3,2}^{(1)}+\Omega_{3}^{(1)}+Q_3+\Omega_1^{(2)}+\Omega_{2,1}^{(2)}+\Omega_2^{(2)}+\Omega_{3,2}^{(2)}+\Omega_3^{(2)}\right),\\
N_4:&=&\Omega_{1,0}^{(2)}+2\Omega_0^{(2)}+3Q_0+4\Omega_0^{(1)}+3\Omega_{1,0}^{(1)}+2\Omega_1^{(1)}+\Omega_{2,1}^{(1)}+2\Omega_{4,0}^{(1)},\\N_8:&=&\Omega_{1,0}^{(1)}+Q_0+\Omega_{4,0}^{(2)}+\Omega_{4,3}^{(2)}+2\left(\Omega_0^{(1)}+\Omega_{4,0}^{(1)}+\Omega_{4}^{(1)}+\Omega_{4,3}^{(1)}+\Omega_{3}^{(1)}+\Omega_{3,2}^{(1)}+\Omega_{2}^{(1)}+Q_2+\Omega_{4}^{(2)}\right).\end{array}$$

The linear system $|N_i|$ induces the elliptic fibration number
$i$ of the Table \eqref{table ell fib on S}, indeed the divisor
$N_i$ corresponds to an elliptic fibration with a fibre of type
$II^*$, $II^*$, $I_{12}^*$, $III^*$, $I_8^*$ if $i=1,2,3,4,8$
respectively, so $N_3$ (resp. $N_8$) corresponds to the unique fibration in Table \eqref{table ell fib on S} with a fiber of type $I_{12}^*$ (resp. $I_8^*$), i.e. the fibration 3 (resp. 8). A priori, $N_1$ could correspond either to the fibration 1 or to the fibration 2 (which are the fibrations which admit at least a fiber of type $II^*$). To distinguish among these cases we consider the other reducible fibers: the curves orthogonal to $N_1$ are
$\Omega_{h,j}^{(1)}$, for $h,j\in\{1,2,3,4\}$, $h<j$,
$\Omega_{k}^{(1)}$ for $k=1,2,3,4$, $Q_2$ and $Q_4$ and they span
the lattice $\widetilde{E_8}$, so $N_1$ corresponds to the
fibration 1 in Table \eqref{table ell fib on S}; the curves
orthogonal to $N_2$ are $\Omega_{h,j}^{(1)}$, for
$h,j\in\{0,1,2,3,4\}$, $h<j$, $\Omega_{k}^{(1)}$ for $k=1,2,3,4$
and $Q_2$ and they span the lattice $D_{10}$, so $N_2$ corresponds
to the fibration 2 in Table \eqref{table ell fib on S}.

Let us now consider the fibration $N_4$. The fiber
associated to $N_4$ is a fiber of type $III^*$. The curves
$\Omega_2^{(1)}$ and $\Omega_{1}^{(1)}$ are sections of the
fibration. The curves $\Omega_{2,1}^{(2)}$, $\Omega_{2}^2$,
$\Omega_{3,2}^{(2)}$, $\Omega_3^{(2)}$, $\Omega_{4,3}^{(2)}$,
$\Omega_4^{(2)}$, $Q_2$, $Q_4$ are orthogonal to $N_4$ and are
the components of another fiber of type $III^*$. This implies that $|N_4|$ is the fibration 4 in Table \eqref{table ell fib on S}, and that the unique other reducible fiber is of type $I_0^*$. The curves $Q_3$,
$\Omega_3^{(1)}$, $\Omega_{4,3}^{(1)}$ and $\Omega_{3,2}^{(1)}$
are orthogonal to $N_4$ and span a lattice isometric to $D_4$. So
they are components of the $I_0^*$-fiber in the fibration $|N_4|$. A $I_0^*$-fiber has five components, four of them are  $Q_3$, $\Omega_3^{(1)}$,
$\Omega_{4,3}^{(1)}$ and $\Omega_{3,2}^{(1)}$, the fifth component
is another curve, say $V_1$. So that we have a special fiber of
the fibration $|N_4|$, which is
$2\Omega_3^{(1)}+\Omega_{4,3}^{(1)}+\Omega_{3,2}^{(1)}+Q_3+V_1$.
Hence we can express the class of the curve $V_1$ as
$N_4-(2\Omega_3^{(1)}+\Omega_{4,3}^{(1)}+\Omega_{3,2}^{(1)}+Q_3)$.
From this expression one can compute all the intersection
numbers of $V_1$ with all the curves $\Omega_{i,j}^k,
\Omega_m^n$ and $Q_k$. One can also observe that since $V_1$ is a
component of a fiber of the fibration $|N_4|$, it is orthogonal to
all the components of the two other reducible fibers of the same
fibration, i.e. to the components of the $III^*$-fibers.

Let us now consider the class:
$$\begin{array}{lll}N_7&:=&\Omega_{1,0}^{(1)}+\Omega_{4,0}^{(1)}+V_1+\Omega_{4,3}^{(1)}+2(\Omega_0^{(1)}+Q_0+\Omega_{0}^{(2)}+\Omega_{1,0}^{(2)}+\Omega_{1}^{(2)}+\Omega_{2,1}^{(2)}+\\
&+& \Omega_{2}^{(2)}+\Omega_{3,2}^{(2)}+\Omega_{3}^{(2)}+\Omega_{4,3}^{(2)}+\Omega_{4}^{(2)}+Q_2+\Omega_2^{(1)}+\Omega_{3,2}^{(1)}+\Omega_3^{(1)}).\end{array}$$
It is the class of the elliptic fibration 7 in the Table \eqref{table ell fib on S} since it is the class of a fiber of an elliptic  fibration on $S_{5,5}$ with a fiber of type $I_{14}^*$.

All the classes $N_i$, for $i=1,2,3,4, 7,8$ considered above correspond to generalized conic bundles which can be written in the following way in terms of the classes on $\widetilde{R}$:

$$
\begin{array}{|c|c|}
\hline
i&(\pi_*(N_i))/2\\
\hline
1&4h-(E_2+F_2+G_2+H_2)-(2E_3+3F_3+3G_3+2H_3)-2(E_4+2F_4+G_4+H_4)-E_{T_2}\\
\hline
2 &4h-(2E_2+2F_2+2G_2+3H_2)-2(E_4+2F_4+G_4+H_4)-(E_3+2F_3+G_3+H_3)-E_5\\
\hline
3&
4h-(2E_1+2F_1+2G_1+3H_1)-(E_4+F_4+G_4+2H_4)-2(E_3+2F_3+G_3+H_3)-E_5\\
\hline
4&2h-(E_1+F_1+2G_1+H_1)-(E_3+2F_3+G_3+H_3)\\
\hline
7& \begin{array}{l}7h-3(E_1+F_1+2G_1+H_1)-(E_2+2F_2+G_2+H_2)+\\-2(E_4+2F_4+G_4+H_4)-(4E_3+6F_3+4G_3+5H_3)\end{array}\\
\hline
8&\begin{array}{l}4h-2(E_+-F_1+G_1+H_1)-(2E_2+3F_2+2G_2+2H_2)+\\-(E_4+2F_4+G_4+H_4)-(2E_3+2F_3+2G_3+3H_3)\end{array}\\
\hline
\end{array}
$$


This allows to compute explicitly the equations of pencils of
singular rational curves in $\mathbb{P}^2$ corresponding to our
elliptic fibrations on $S_{5,5}$.

$$
\begin{array}{|c|c|}
\hline
i&\mbox{pencils in }\mathbb{P}^2\\
\hline
1&a(x_0^4-x_0^3x_1+3x_0^2x_1x_2-4x_0^3x_2+6x_0^2x_2^2-3x_0x_1x_2^2-4x_0x_2^3+x_1x_2^3+x_2^4)+g x_0x_1^2x_2\\
\hline
2&n(x_0^4-2x_0^3x_1+x_0^2x_1^2+7x_0^2x_1x_2-3x_1^2x_0x_2-4x_2x_0^3+6x_2^2x_0^2-8x_0x_1x_2^2)+\\&+n(2x_1^2x_2^2-4x_2^3x_0+3x_1x_2^3+x_2^4)+m(8x_1^2x_2x_0-8x_1^3x_2)\\
\hline
3&f(-x_0^4+x_0^2x_1x_2+x_0^3x_2-x_1^2x_2^2)+n(x_0^4+x_2^2x_1^2-x_0^3x_2-x_0x_1x_2^2)\\
\hline
4&\tau x_0^2+\sigma(x_0x_2-x_1x_2)\\
\hline 7& \begin{array}{l}\tau
x_0x_1x_2(x_2-x_0)(x_0-x_1)(x_0^2-x_0x_2+x_1x_2)
+\sigma(-x_1x_0^6+2x_0x_1^3x_2^3-12x_0^4x_1x_2^2+\\+7x_0^3x_1^2x_2^2-8x_0^2x_1^2x_2^3+6x_0^5x_1x_2-3x_0^4x_1^2x_2+
x_0^7-x_2^4x_1^3-x_0^2x_1^3x_2^2-3x_2^4x_1x_0^2+\\+10x_0^3x_1x_2^3+6x_2^2x_0^5-4x_2x_0^6+3x_2^4x_1^2x_0+x_2^4x_0^3-4x_2^3x_0^4)\end{array}\\
\hline
8&\begin{array}{l}s(x_0^4-2x_0^3x_1+x_0^2x_1^2+3x_0^2x_1x_2-x_0x_1^2x_2-2x_0^3x_2+x_0^2x_2^2-x_0x_1x_2^2)+\\+t(-x_0^2x_1x_2+x_0x_1^2x_2+2x_0x_1x_2^2-x_1^2x_2^2)\end{array}
\\
\hline
\end{array}$$

Now it remains to find the Weierstrass equations of the elliptic
fibrations on $S_{5,5}$ corresponding to the linear systems $\left(\pi_*(N_j)/2\right)$
on $\widetilde{R}$.  In case $4$, the curves which are fibers of
the conic bundle have degree 2, so we can directly apply the first
algorithm in Section \ref{subsec: an algorithm (generalized)
conic bundle}.  In cases $1,2,3,$ and $8$ the curves are quartics which satisfy condition
$(\dagger)$ and so we may apply the second algorithm in Section \ref{subsec: an algorithm (generalized)
conic bundle}.  The rational parameterizations and induced Weierstrass equations
are given by
\begin{equation}\label{eq: Weierstrass generalized conic bunde S}
{\scriptsize
\begin{array}{|c|c|c|}
\hline
i&\mbox{rational parameterization }&\mbox{elliptic fibration }\\ \hline
1 & \begin{array}{l} x_0 = -agp^3 - 2agp^2 - agp \\ x_1 = -g^2p^4 - 2agp^2 - a^2 \\ x_2 = g^2p^4 - agp^3 + g^2p^3 - agp^2 \end{array} & \begin{array}{c} A=- 3a^4g^4, B=a^7g^5 + a^5g^7 \\ \Delta=-432g^{10}  a^{10}(a - g)^2 (a + g)^2  \end{array} \\ \hline
2 & \begin{array}{l} x_0 = -64m^2p^4 + 8nmp^3 - 8nmp \\ x_1 = -64m^2p^4 + 16nmp^3 \\ \qquad - n^2p^2 - 16nmp^2 + 2n^2p - n^2 \\ x_2 = -64m^2p^4 + 8nmp^3 + 64m^2p^3 - 8nmp^2 \end{array} & \begin{array}{c} A=108m^2n^4(-n^2 + 384m^2) \\ B = -432m^3n^5 (n^4 - 576n^2m^2 + 55296m^4)\\ \Delta=570630428688384n^{10} m^{12} (n^2 - 432m^2)  \end{array} \\ \hline
3 & \begin{array}{l} x_0 = p (-np + fp + n)(-np^2 + fp^2 + fp - n + f) \\ x_1 = (p + 1) (-n + f) p^2 (-np + fp + n) \\ x_2 = (-np^2 + fp^2 + fp - n + f)^2 \end{array} & \begin{array}{c} A=(3f^6 - 18f^5n + 36f^4n^2 - 24f^3n^3 - 3f^2n^4 + 6fn^5 - n^6)\cdot \\ 27(f - n)^2  \\ B = (9f^6 - 54f^5n + 117f^4n^2 - 108f^3n^3 + 39f^2n^4 - 6fn^5 + n^6)\cdot  \\ 27n (f - n)^3 (3f^2 - 6fn + 2n^2)   \\ \Delta=-8503056(n - 2f)(3n - 2f) f^2(2n - f)^2(n - f)^{18}  \end{array} \\ \hline
4 & & \begin{array}{c}A=\tau^3(1+\tau)^2,\ B=0,\\ \Delta=4\tau^9(1+\tau)^6 \end{array} \\ \hline
8 & \begin{array}{l} x_0 = (p - 1)(sp + t)(tp^2 - sp - t) \\ x_1 = s p^2 (tp^2 - sp - t) \\ x_2 = (p - 1) (-sp + tp - t)(sp + t) \end{array} & \begin{array}{c} A=-27t^2s^2(s^4 + 4s^2t^2 + t^4) \\ B = -27t^3s^3(s^2 + 2t^2)(2s^4 + 8s^2t^2 - t^4)  \\ \Delta=8503056s^8 t^{14} (s^2 + 4t^2)  \end{array} \\ \hline
\end{array}
}
\end{equation}
%

The fibration induced by $|N_7|$ is the unique elliptic fibration
on a K3 surface with a fiber of type $I_{14}^*$ (which is a
maximal fiber, since if a K3 surface admits an elliptic fibration
with this reducible fiber, then this is the unique reducible
fiber). So it suffices to know the equation of this elliptic
fibration, which is classically known \cite[Theorem 1.2]{Shioda}. Hence for
$|N_7|$ we only re-write here the known equation, a part from the
fact that we chose the parameters over
$\mathbb{P}^1_{(\tau:\sigma)}$ in such a way that 4 fibers of type
$I_1$ are over the points $(\tau:\sigma)=((\pm\sqrt{-26\pm
14i\sqrt{7}})/4:1)$. These values correspond to the septic in
$|N_7|$ which are tangent to $m_2$ in a smooth point.  We therefore have equation:

\begin{equation}\label{eq: N7}
\begin{array}{c}
A=\tau^2(-12\sigma^6-(3/8)\sigma^4\tau^2-(15/2048)\sigma^2\tau^4-(9/262144)\tau^6),\\
B=\tau^3\sigma(-16\sigma^8-(3/4)\tau^2\sigma^6-(21/1024)\sigma^4\tau^4-(35/131072)\tau^6\sigma^2-(63/33554432)\tau^8),\\
\Delta=-(729/4503599627370496)\tau^{20}(2048\sigma^4+52\tau^2\sigma^2+\tau^4).
\end{array}
\end{equation}

We note that it is possible to obtain such an equation using our techniques.  Indeed one may find a rational parameterization of the septic plane curves of the generalized conic bundle by rational cubic curves, and from there the computation follows in the same way as above.  We omit this computation here as the equation is already in the literature.

\section{The K3 surface $X_{5,5}$ and its elliptic fibrations}\label{sec: the K3 surface X}

The K3 surface $X_{5,5}$ is very well-known and studied, in
particular since its N\'eron--Severi group allows to describe the
moduli spaces of K3 surfaces with an elliptic fibration with a
5-torsion section in terms of $L$-polarized K3 surfaces, see
\cite{G}. Indeed, if a K3 surface admits an elliptic fibration
with a 5-torsion section, then the lattice $U\oplus M$ has to be
primitively embedded in its N\'eron--Severi lattice, where $M$ is
an overlattice of index 5 of a root lattice. The lattice $M$ is
known to be an overlattice of index 5 of $A_4^4$ and so the
lattice $U\oplus M$ is isometric to the N\'eron--Severi lattice of
$X_{5,5}$. Hence the moduli space of the K3 surfaces which are
$NS(X_{5,5})$-polarized coincides with the moduli space of the K3
surfaces which admit an elliptic fibration with a 5-torsion
section.

\subsection{The list of all the elliptic fibrations}
The transcendental lattice of $X_{5,5}$ is known to be $T_X\simeq
U\oplus U(5)$, see e.g. \cite{GSarti}. This allows to apply the Nishiyama
method in order to classify (at least lattice theoretically) the
elliptic fibrations on $X_{5,5}$. This method is studied and
applied in several papers, and we do not intend to describe it in
details. Here we just observe that we can apply the method using
$A_4\oplus A_4$ as the lattice $T$, so that $T$ is a negative-definite
lattice with the  same discriminant form as the one of
the transcendental lattice $T_X$ and whose rank is $\rk(T) = \rk(T_X) + 4$.
Then one has to find the primitive
embeddings of $A_4\oplus A_4$ in the Niemeier lattice up to isometries, and this can be done by embedding $T$
into the root lattice of the Niemeier lattices. The list of the
Niemeier lattices and the possible embeddings of root lattices in
Niemeier lattices up to the Weyl group can be found in
\cite{Nish} (see also \cite{Nie}). In particular one has that $A_4$ embeds primitively
in a unique way (up to the action of the Weyl group) in $A_n$ for
$n>4$, in $D_m$ for $m>4$, in $E_h$ for $h=6,7,8$, see
\cite[Lemmas 4.2 and 4.3]{Nish}. On the other hand $A_4\oplus A_4$
has a primitive embedding in $A_n$ for $n\geq 9$, in $D_m$ for
$m\geq 10$, and has no primitive embeddings in $E_h$ for $h=6,7,8$,
see \cite[Lemma 4.5]{Nish}. All these primitive embeddings are
unique with the exception of $A_4\oplus A_4\hookrightarrow
D_{10}$, for which there are two possible primitive embeddings,
see \cite[Page 325, just before Step 3]{Nish}. The orthogonal complement of
the embedded copy of $A_4\oplus A_4$ in the root lattice of each
Niemeier lattices is a lattice $L$ which can be computed by
\cite[Corollary 4.4]{Nish} and which encodes information about both
the reducible fibers and the rank of the Mordell--Weil group of
the elliptic fibrations. In particular the root lattice of $L$ is
the lattice spanned by the irreducible components of the reducible
fibers orthogonal to the zero section. More precise information on the sections can be obtained
by a deeper analysis of these embeddings, but this is outside the scope of this paper.
So in the following list we give the root lattice of the Niemeier
lattice that we are considering, the embeddings of $A_4\oplus A_4$
in this root lattice, the root lattices of the orthogonal complement, and in
the last two columns the properties of the associated elliptic
fibration. This gives the complete list of the types of elliptic fibrations
on $X_{5,5}$ :
\begin{equation}\label{eq: table elliptic fibrations on X55} \end{equation}
$$\begin{array}{c|c|c|c|c|c|c|c}
\mbox{n}^o&\mbox{Niemeier}&\mbox{embedding(s)}&\mbox{roots orthogonal}&\mbox{singular fibers}&\mbox{rk}(MW)\\
1&E_8^3&A_4\subset E_8\ A_4\subset E_8&A_4\oplus A_4\oplus E_8&2I_5+II^*+2I_1&0\\
2&E_8\oplus D_{16}&A_4\subset E_8\ A_4\subset D_{16}&A_4\oplus D_{11}&I_5+I_{7}^*+6I_1&1\\
3&E_8\oplus D_{16}&A_4\oplus A_4\subset D_{16}&E_8\oplus D_{6}&II^*+I_2^*+6I_1&2\\
4&E_7^2\oplus D_{10}&A_4\subset E_7\ A_4\subset E_7& A_2^2\oplus D_{10}&2I_3+I_6^*+6I_1&2\\
5&E_7^2\oplus D_{10}&A_4\subset E_7\ A_4\subset D_{10}&E_7\oplus A_2\oplus D_{5}&III^*+I_3+I_1^*+5I_1&2\\
6&E_7^2\oplus D_{10}&A_4\oplus A_4\subset D_{10}&E_7^2&2III^*+6I_1&2\\
7&E_7^2\oplus D_{10}&A_4\oplus A_4\subset D_{10}& E_7^2&2III^*+6I_1&2\\
8&E_7\oplus A_{17}&A_4\subset E_7\ A_4\subset A_{17}&A_2\oplus A_{12}&I_3+I_{13}+8I_1&2\\
9&E_7\oplus A_{17}&A_4\oplus A_4\subset A_{17}& E_7\oplus A_7&III^*+I_8+7I_1&2\\
10&D_{24}&A_4\oplus A_4\subset D_{24}& D_{14}&I_{10}^*+8I_1&2\\
11&D_{12}\oplus D_{12}&A_4\subset D_{12}\ A_4\subset D_{12}&D_{7}\oplus D_7&2I_3^*+6I_1&2\\
12&D_{12}\oplus D_{12}&A_4\oplus A_4\subset D_{12}& D_{12}\oplus A_1^2&I_8^*+2I_2+6I_1&2\\
13&D_8^3&A_4\subset D_{8}\ A_4\subset D_{8}&D_{8}\oplus A_3\oplus A_3&I_4^*+2I_4+6I_1&2\\
14&D_9\oplus A_{15}&A_4\subset D_9\ A_4\subset A_{15}&D_4\oplus A_{10}&I_0^*+I_{11}+7I_1&2\\
15&D_9\oplus A_{15}&A_4\oplus A_4\subset A_{15}& D_9\oplus A_{5}&I_5^*+I_6+7I_1&2\\
16&E_6^4&A_4\subset E_6\ A_4\subset E_6&A_1^2\oplus E_6^2&2I_2+2IV^*+4I_1&2\\
17&E_6\oplus D_7\oplus A_{11}&A_4\subset E_6\ A_4\subset D_7&A_1^3\oplus A_{11}&3I_2+I_{12}+6I_1&2\\
18&E_6\oplus D_7\oplus A_{11}&A_4\subset E_6\ A_4\subset A_{11}&A_1\oplus D_7\oplus A_{6}&I_2+I_3^*+I_7+6I_1&2\\
19&E_6\oplus D_7\oplus A_{11}&A_4\subset D_7\ A_4\subset A_{11}& E_6\oplus A_1^2\oplus A_{6}&IV^*+2I_2+I_7+5I_1&2\\
20&E_6\oplus D_7\oplus A_{11}&A_4\oplus A_4\subset A_{11}&E_6\oplus D_7\oplus A_1&IV^*+I_3^*+I_2+5I_1&2\\
21&D_6^4&A_4\subset D_6\ A_4\subset D_6&D_6^2&2I_2^*+8I_1&4\\
22&D_6\oplus A_9^2&A_4\subset D_6\ A_4\subset A_9&A_4\oplus A_9&I_5+I_{10}+9I_1&3\\
23&D_6\oplus A_9^2&A_4\subset A_9\ A_4\subset A_9&D_6\oplus A_4\oplus A_4&I_2^*+2I_5+6I_1&2\\
24&D_6\oplus A_9^2&A_4\oplus A_4\subset A_9&D_6\oplus A_9&I_2^*+I_{10}+6I_1&1\\
25&D_5^2\oplus A_7^2&A_4\subset D_5\ A_4\subset D_5& A_7^2&2I_8+8I_1&2\\
26&D_5^2\oplus A_7^2&A_4\subset D_5\ A_4\subset A_7&A_7\oplus D_5\oplus A_2&I_8+I_1^*+I_3+6I_1&2\\
27&D_5^2\oplus A_7^2&A_4\subset A_7\ A_4\subset A_7&D_5^2\oplus A_2^2&2I_1^*+2I_3+4I_1&2\\
28&A_8^3&A_4\subset A_8\ A_4\subset A_8&A_3^2\oplus A_8&2I_4+I_9+7I_1&2\\
29&A_{24}&A_4\oplus  A_4\subset A_{24}&A_{14}&I_{15}+9I_1&2\\
30&A_{12}^2&A_4\subset A_{12}\ A_4\subset A_{12}&A_7^2&2I_8+8I_1&2\\
31&A_{12}^2&A_4\oplus A_4\subset A_{12}&A_2\oplus A_{12}&I_3+I_{13}+8I_1&2\\
32&D_4\oplus A_5^4&A_4\subset A_5\ A_4\subset A_5&D_4\oplus A_5^2&I_0^*+2I_6+6I_1&2\\
33&A_6^4&A_4\subset A_6\ A_4\subset A_6&A_1^2\oplus A_6^2&2I_2+2I_7+6I_1&2\\
34&A_4^6&A_4\subset A_4\ A_4\subset A_4& A_4^4& 4I_5+4I_1&0
\end{array}
$$
Observe that lines 6 and 7 correspond to the two different embeddings of $A_4\oplus A_4$ in $D_{10}$.
The K3 surface $X_{5,5}$ is obtained as double cover of a rational
surface $R_{5,5}$ branched on two smooth fibers, so there are no
elliptic fibrations induced by generalized conic bundles or by
splitting genus 1 pencils, see \cite{GS}. So an elliptic fibration
on $X_{5,5}$ is either induced by a conic bundle on $R_{5,5}$ or
it is of type 3.

Putting together these considerations with the results of Sections \ref{sec: K3 covers of R55} and  \ref{sec: elliptic fibrations induced by conic bundles}, we proved the following proposition.
\begin{proposition}\label{prop: classification elliptic fibrations on X55} The elliptic fibrations on $X_{5,5}$ are of 34 types, listed in Table \ref{eq: table elliptic fibrations on X55}. The fibration in line 34 of Table \ref{eq: table elliptic fibrations on X55} is induced by $\E_R$ and its equation is given in Section \ref{sub: Weierstrass X55}; the fibrations of lines 22, 30 and 32 are induced by conic bundles on $R_{5,5}$ and their equations are given in Section \ref{subsec: ef induced by cb on X}. The other fibrations on $X_{5,5}$ are of type 3.\end{proposition}

\subsection{Fibration of type 3: an example, the fibration 26}
The aim of this section is to construct explicitly an example of an elliptic fibration of type 3 and to discuss the geometry of the non complete linear system on $\mathbb{P}^2$ which induces this fibration.

The divisor $$D_1:=\Omega_0^{(2,2)}+\Omega_0^{(1,1)}+2Q_0+2\Omega_0^{(2,1)}+\Omega_1^{(2,1)}+\Omega_4^{(2,1)}$$ corresponds to the class of the fiber of a fibration which has one reducible fiber of type $I_1^*$, thus $|D_1|$ is one of the fibrations 5,26,27 in Table \eqref{eq: table elliptic fibrations on X55}. The curves $\Omega_2^{(2,1)}$, $\Omega_3^{(2,1)}$, $Q_2$, $Q_3$, $\Omega_{1}^{(2,2)}$, $\Omega_{4}^{(2,2)}$,  $\Omega_{1}^{(1,1)}$,  $\Omega_{4}^{(1,1)}$ are sections of the fibration $|D_1|$.
Assuming that $\Omega_{2}^{(2,1)}$ is the zero section there is fiber whose non trivial components are $\Omega_{2}^{(2,2)}$, $\Omega_{3}^{(2,2)}$, $Q_4$, $\Omega_{4}^{(1,2)}$, $\Omega_{2}^{(1,2)}$, $\Omega_{3}^{(1,2)}$, $\Omega_{1}^{(1,2)}$. So there is a fiber of type $I_8$ and $|D_1|$ is the fibration 26 in \eqref{eq: table elliptic fibrations on X55} and there is a fiber of type $I_3$ whose non trivial components are $\Omega_{2}^{(1,1)}$, $\Omega_{3}^{(1,1)}$.

Denoted by $\pi:X_{5,5}\ra R_{5,5}$, we have $\pi(Q_i)=P_i$ and $\pi(\Omega_i^{(j,1)})=\pi(\Omega_i^{(j,2)})=\Theta_i^{(j)}$, for $j=1,2$ . In particular $\iota(\Omega_i^{(j,1)})=\Omega_i^{(j,2)}$, thus, $\iota(D_1)\neq D_1$ and indeed $D_2:=\iota(D_1)$ is  $$D_2:=\Omega_0^{(2,1)}+\Omega_0^{(1,2)}+2Q_0+2\Omega_0^{(2,2)}+\Omega_1^{(2,2)}+\Omega_4^{(2,2)}.$$

We observe that $\Omega_0^{(2,1)}D_1=0$,  $\Omega_0^{(1,2)}D_1=2$, $Q_0D_1=0$,
$\Omega_0^{(2,2)}D_1=0$, $\Omega_1^{(2,2)}D_1=1$, $\Omega_4^{(2,2)}D_1=1$, and thus $D_2D_1=0+2+0+0+1+1=4$.

So $D_1D_2=4$ and $(D_1+D_2)^2=8$. In particular a smooth member of the linear system $|D_1+D_2|$ is a curve of genus 5.

We are interested in the class of the curve $\pi(D_1+D_2)$. By the projection formula $\pi_*(D_1+D_2)=2\pi(D_1+D_2)$, so we are looking for $\frac{1}{2}\pi_*(D_1+D_2)$. Since $\pi(D_1)=\pi(D_2)$, $\frac{1}{2}\pi_*(D_1+D_2)=\pi_*(D_1)=\pi_*(D_2)$.

We recall that the map $\pi$ restricted to  $\Omega_i^{(j,1)}$ is a $1:1$ map to $\Theta_i^{(j)}$, and similarly the map $\pi$ restricted to $\Omega_i^{(j,2)}$ is a $1:1$ map to $\Theta_i^{(j)}$. On the other hand the map $\pi$ restricted to the sections $Q_i$ is a $2:1$ map to the section $P_i$. So
$$\pi_*(D_1)=\Theta_0^{(2)}+\Theta_0^{(1)}+4P_0+2\Theta_0^{(2)}+\Theta_1^{(2)}+\Theta_4^{(2)}=3\Theta_0^{(2)}+\Theta_1^{(2)}+\Theta_4^{(2)}+\Theta_0^{(1)}+4P_0.$$

Hence by \eqref{R_5,5 curve ids} $$\pi_*(D_1)=3E_1+\ell_1+\ell_2+m_1+4F_1=3h-E_2-F_2-E_{Q_5}-E_4-2F_4-E_3-F_3,$$
which is the class of the strict transforms of  cubics in $\mathbb{P}^2$ passing through $Q_2$, $Q_3$, $Q_4$ with tangent $\ell_2$, and $Q_5$.

The equation of the cubics satisfying these properties is
\begin{multline}\label{eq: type three}ax_0^3+bx_0^2x_1+(-a-b)x_0x_1^2+dx_0^2x_2+\\ ex_0x_1x_2+fx_1^2x_2+(-3a-2d)x_0x_2^2+(a-e-f)x_1x_2^2+(d+2a)x_2^3=0.\end{multline}

This equation depends on 5 parameters (4 projective parameters) and specializes to equations of cubics which split on the double cover and induces elliptic fibrations on $X_{5,5}$.

The generic cubic $c_3$ as in equation \eqref{eq: type three} is a cubic in $\mathbb{P}^2$. We recall that $X_{5,5}$ is the double cover of $\mathbb{P}^2$ branched along the reducible sextic $c_6$ whose equation is $f_3g_3=0$ for two cubics $f_3$ and $g_3$.
So $c_3$ and $c_6$ meet in 18 points in $\mathbb{P}^2$ counted with multiplicity. Recall that $c_6$ is singular at  $Q_2$, $Q_3$, $Q_4$, and $Q_5$ and in $Q_4$, $c_6$ is the union of two cubics, both with tangent direction $\ell_2$. Hence $c_3$ intersects $c_6$  in $Q_2$, $Q_3$, $Q_5$ with multiplicity 2, and in $Q_4$ with multiplicity 4. Outside these points, $c_3$ and $c_6$ intersect in $18-2-2-2-4=8$ points. The inverse image of $c_3$ on $X_{5,5}$ is a double cover of $c_3$ branched in 8 points. Since generically $c_3$ is a smooth cubic in $\mathbb{P}^2$, it has genus 1 and then its inverse image on $X_{5,5}$ has genus $g$ such that $2g-2=2(0)+8.$ So $g=5$. 

We already observed that $c_6$ is the union of two smooth cubics in
$\mathbb{P}^2$, which are the image of the branch curves of the
double cover $X_{5,5}\ra R_{5,5}$. Denoting by $b_1:f_3=0$ and $b_2:=g_3=0$ these two
cubics,  $c_3$ intersects $b_1$ in nine points, five of which are
$Q_2$, $Q_3$, $Q_4$ with tangent $\ell_2$, and $Q_5$. So $c_3$ and
$b_1$ generically intersect in four other  points. In the case
$c_3$ splits in the double cover (which is the case in which $c_3$
is the image both of $D_1$ and $D_2$), these four points are
either one point with multiplicity 4 or two points with
multiplicity 2. The strict transform of $b_1$ (resp. $b_2$) on
$R_{5,5}$ is a smooth fiber of the fibration on $R_{5,5}$ and its
pullback to $X_{5,5}$ is a smooth fiber, denoted by $B_1$ (resp. $B_2)$\footnote{Not to be confused with the conic bundles $B_1$ and $B_2$ in Section \ref{sec: conic bundles}}, of
the fibration induced on $X_{5,5}$ by the one on $R_{5,5}$. Since a section of this fibration is $Q_0$, $B_1Q_0=B_2Q_0=1$,
$B_1\Omega_i^{(k,j)}=0$ and thus $D_1B_1=D_1B_2=D_2B_1=D_2B_2=2$.
In particular $D_1$ and $D_2$ intersect in four points, two on
$B_1$ and two on $B_2$. Considering the image of these curves in
$\mathbb{P}^2$, one realizes that if $c_3$ is the image of $D_1$,
then it intersects $b_1$ (resp. $b_2$) in two points each with
multiplicity 2.

So, theoretically, in order to find the specializations of a curve $c_3$ which is the image of $D_1$, one has to require that the intersection $c_3\cap b_1$  consists of the points $Q_2$, $Q_3$, $Q_4$ with tangent $\ell_2$, and $Q_5$, and of two other  points each with multiplicity 2.

As in the previous context one can compute the resultant between
the equation of the cubics $c_3$ and the branch locus of the
double cover $X_{5,5}\ra\mathbb{P}^2$, given in \eqref{eq: X55 2:1 cover of  P2}. Since not all the curves in the linear system
$|c_3|$ split in the double cover, the resultant of the equation of $c_3$ and the equation
of the branch locus of  $X\rightarrow\mathbb{P}^2$  is not the
square of a polynomial. Nevertheless one recognizes some factors
with even multiplicity (which correspond to the conditions that
$c_3$ passes with a certain multiplicity through a certain base
point of the pencil of cubics from which  $R_{5,5}$ arises) and
one can also observe that for certain choices of the values
$(a,d,e,f)$  the resultant becomes a square. For examples one
observes that for $f=-a-d-e$ the resultant with respect to $x_1$
is
$$
\begin{array}{c}x_2^2(x_0-x_2)^6(x_0x_2(em+am-a+dm)+x_0^2(am+bm)-x_2^2(d-2a))^2\\(x_0x_2(el+al-a+dl)+x_0^2(al+bl)-x_2^2(d-2a))^2\end{array}$$
so in this case we know that the intersection between the generic member of $|c_3|$ and the branch curve is always with even degree, which is the necessary condition to have a splitting in any point.

\end{document}